\newtheorem{thm}{Theorem} [section]
\newtheorem{lem}[thm]{Lemma}
\newtheorem{prop}[thm]{Proposition}
\newtheorem{problem}[thm]{Problem}
\theoremstyle{definition}
\newtheorem{defn}[thm]{Definition}
\theoremstyle{remark}
\newtheorem{rem}[thm]{Remark}
\numberwithin{equation}{section}
\begin{document}

\newcommand{\thmref}[1]{Theorem~\ref{#1}}
\newcommand{\secref}[1]{Section~\ref{#1}}
\newcommand{\lemref}[1]{Lemma~\ref{#1}}
\newcommand{\propref}[1]{Proposition~\ref{#1}}
\newcommand{\corref}[1]{Corollary~\ref{#1}}
\newcommand{\remref}[1]{Remark~\ref{#1}}
\newcommand{\eqnref}[1]{(\ref{#1})}

\newcommand{\exref}[1]{Example~\ref{#1}}

\newtheorem{innercustomthm}{{\bf Theorem}}
\newenvironment{customthm}[1]
  {\renewcommand\theinnercustomthm{#1}\innercustomthm}
  {\endinnercustomthm}
  
  \newtheorem{innercustomcor}{{\bf Corollary}}
\newenvironment{customcor}[1]
  {\renewcommand\theinnercustomcor{#1}\innercustomcor}
  {\endinnercustomthm}
  
  \newtheorem{innercustomprop}{{\bf Proposition}}
\newenvironment{customprop}[1]
  {\renewcommand\theinnercustomprop{#1}\innercustomprop}
  {\endinnercustomthm}

\newcommand{\bbinom}[2]{\begin{bmatrix}#1 \\ #2\end{bmatrix}}
\newcommand{\cbinom}[2]{\set{\^!\^!\^!\begin{array}{c} #1 \\ #2\end{array}\^!\^!\^!}}
\newcommand{\abinom}[2]{\ang{\^!\^!\^!\begin{array}{c} #1 \\ #2\end{array}\^!\^!\^!}}
\newcommand{\qfact}[1]{[#1]^^!}

\newcommand{\nc}{\newcommand}

\nc{\Ord}{\text{Ord}_v}

 \nc{\A}{\mathcal A} 
  \nc{\G}{\mathbb G} 
\nc{\Ainv}{\A^{\rm inv}}
\nc{\aA}{{}_\A}
\nc{\aAp}{{}_\A'}
\nc{\aff}{{}_\A\f}
\nc{\aL}{{}_\A L}
\nc{\aM}{{}_\A M}
\nc{\Bin}{B_i^{(n)}}
\nc{\dL}{{}^\omega L}
\nc{\Z}{{\mathbb Z}}
 \nc{\C}{{\mathbb C}}
 \nc{\N}{{\mathbb N}}
 \nc{\R}{{\mathbb R}}
  \renewcommand{\S}{{\mathcal S}}
  \nc{\I}{{\mathcal I}}
 \nc{\fZ}{{\mf Z}}
 \nc{\F}{{\mf F}}
 \nc{\Q}{\mathbb{Q}}
 \nc{\la}{\lambda}
 \nc{\ep}{\epsilon}
 \nc{\h}{\mathfrak h}
 \nc{\He}{\bold{H}}
 \nc{\htt}{\text{tr }}
 \nc{\n}{\mf n}
 \nc{\g}{{\mathfrak g}}
 \nc{\DG}{\widetilde{\mathfrak g}}
 \nc{\SG}{\breve{\mathfrak g}}
 \nc{\is}{{\mathbf i}}
 \nc{\V}{\mf V}
 \nc{\bi}{\bibitem}
 \nc{\E}{\mc E}
 \nc{\ba}{\tilde{\pa}}
 \nc{\half}{\frac{1}{2}}
 \nc{\hgt}{\text{ht}}
 \nc{\ka}{\kappa}
 \nc{\mc}{\mathcal}
 \nc{\mf}{\mathfrak} 
 \nc{\hf}{\frac{1}{2}}
\nc{\ov}{\overline}
\nc{\ul}{\underline}

\nc{\xx}{{\mf x}}
\nc{\id}{\text{id}}
\nc{\one}{\bold{1}}
\nc{\mfsl}{\mf{sl}}
\nc{\mfgl}{\mf{gl}}
\nc{\ti}[1]{\textit{#1}}
\nc{\Hom}{\text{Hom}}
\nc{\Cat}{\mathscr{C}}
\nc{\CatO}{\mathscr{O}}
\renewcommand{\O}{\mathscr{O}}
\nc{\Tan}{\mathscr{T}}
\nc{\Umod}{\mathscr{U}}
\nc{\Func}{\mathscr{F}}
\nc{\Kh}{\text{Kh}}
\nc{\Khb}[1]{\llbracket #1 \rrbracket}

\nc{\ua}{\mf{u}}
\nc{\nb}{u}
\nc{\inv}{\theta}
\nc{\mA}{\mathcal{A}}
\newcommand{\TT}{\mathbf T}
\newcommand{\TA}{{}_\A{\TT}}
\newcommand{\tK}{\widetilde{K}}
\newcommand{\al}{\alpha}
\newcommand{\Fr}{\bold{Fr}}

\nc{\Qq}{\Q(v)}
\nc{\U}{\bold{U}}
\nc{\uu}{\mathfrak{u}}
\nc{\Udot}{\dot{\U}}
\nc{\f}{\bold{f}}
\nc{\fprime}{\bold{'f}}
\nc{\B}{\bold{B}}
\nc{\Bdot}{\dot{\B}}
\nc{\Dupsilon}{\Upsilon^{\vartriangle}}
\newcommand{\T}{\texttt T}
\newcommand{\vs}{\varsigma}
\newcommand{\Pa}{{\bf{P}}}
\newcommand{\Padot}{\dot{\bf{P}}}

\nc{\ipsi}{\psi_{\imath}}
\nc{\Ui}{{\bold{U}^{\imath}}}
\nc{\uidot}{\dot{\mathfrak{u}}^{\imath}}
\nc{\Uidot}{\dot{\bold{U}}^{\imath}}
 \nc{\be}{e}
 \nc{\bff}{f}
 \nc{\bk}{k}
 \nc{\bt}{t}
 \nc{\bs}{\backslash}
 \nc{\BLambda}{{\Lambda_{\inv}}}
\nc{\Ktilde}{\widetilde{K}}
\nc{\bktilde}{\widetilde{k}}
\nc{\Yi}{Y^{w_0}}
\nc{\bunlambda}{\Lambda^\imath}
\newcommand{\Iwhite}{\I_{\circ}}
\nc{\ile}{\le_\imath}
\nc{\il}{<_{\imath}}

\newcommand{\ff}{B}


\nc{\etab}{\eta^{\bullet}}
\newcommand{\Iblack}{\I_{\bullet}}
\newcommand{\wb}{w_\bullet}
\newcommand{\UIblack}{\U_{\Iblack}}

\newcommand{\blue}[1]{{\color{blue}#1}}
\newcommand{\red}[1]{{\color{red}#1}}
\newcommand{\green}[1]{{\color{green}#1}}
\newcommand{\white}[1]{{\color{white}#1}}

\newcommand{\dvd}[1]{t_{\odd}^{{(#1)}}}
\newcommand{\dvp}[1]{t_{\ev}^{{(#1)}}}
\newcommand{\ev}{\mathrm{ev}}
\newcommand{\odd}{\mathrm{odd}}

\newcommand\TikCircle[1][2.5]{{\mathop{\tikz[baseline=-#1]{\draw[thick](0,0)circle[radius=#1mm];}}}}

\newcommand{\commentcustom}[1]{}

\raggedbottom

\title{the twisted gan-gross-prasad problem for finite classical groups}
\author{nhat hoang le}
\begin{abstract}
In this paper, we study the twisted Gan-Gross-Prasad problem for classical
groups over finite fields. We formulate a multiplicity formula for
Deligne-Lusztig characters and give a complete answer for cuspidal
representations arising from elliptic tori.
\end{abstract}

\address{Department of Mathematics, National University of Singapore, Block
S17, 10 Lower Kent Ridge Road, Singapore 119076.}
\email{lnhoang@nus.edu.sg}
\maketitle

\section{Introduction}\label{sec1}

In \cite{GP1,GP2}, B. Gross and D. Prasad studied the problem of
restriction of irreducible representations of special orthogonal groups
over a local field based on the Langlands parametrization of irreducible representations. After that, joint with W. T. Gan, in \cite{GGP1}, they formulated the conjecture to all classical groups, which is called the local Gan-Gross-Prasad (GGP) conjecture. In the $p$-adic case, the local Gan-Gross-Prasad conjecture has been solved by J.-L. Waldspurger and C. Moeglin and J.-L. Waldspurger \cite{MW,Wal1,Wal2,Wal3} for orthogonal groups, by R. Beuzart-Plessis \cite{BP1,BP2} for Bessel models for unitary groups and W. T. Gan and A. Ichino \cite{GI} for Fourier-Jacobi models for unitary groups, and by H. Atobe \cite{Ato} for symplectic-metaplectic groups. In the finite field case, the Gan-Gross-Prasad problem for finite classical groups has been studied by D. Liu and Z. Wang, and Z. Wang in \cite{LW1,LW2,LW3,Wang1,Wang2}.

Recently, in \cite{GGP3}, W. T. Gan and B. Gross and D. Prasad consider
a twisted variant of the local Gan-Gross-Prasad conjecture (for Fourier-Jacobi
models). Let $K$ be a nonarchimedean local field and $L$ be a separable
quadratic extension of $K$. Let $V$ be a non-degenerate skew-hermitian
space of dimension $n$ over $L$. Let $\omega_{V,\psi,\mu}$ be the
Weil representation of the isometry group $\text{U}\left(V\right)$,
where $\psi$ is a nontrivial additive character of $K$ and $\mu$
is a conjugate-symplectic character of $L^{\times}$ such that $\mu\mid_{K^{\times}}$
is the quadratic character $\omega_{L/K}$ associated to $L/K$ by
local class field theory. Instead of considering $\text{U}\left(V\right)$
as a subgroup of $\text{U}\left(V\right)\left(K\times K\right)=\text{U}\left(V\right)\times\text{U}\left(V\right)$,
they consider it as a subgroup of $\text{U}\left(V\right)\left(L\right)\cong\text{GL}_{n}\left(L\right)$.
They propose the problem of determining 
\[
\dim\text{Hom}_{\text{U}\left(V\right)}\left(\Pi,\omega_{V,\psi,\mu}\right),
\]
where $\Pi$ is an irreducible generic representation of $\text{GL}_{n}\left(L\right)$.
In \cite{Le1,Le2}, the author established the tempered case of the conjecture.

In section 2.4 in \cite{GGP3}, the three authors also consider restriction
problems for both skew-hermitian spaces and symplectic groups over
a finite field $k$. In this paper, we study the twisted
Gan-Gross-Prasad problems over finite fields. Let $F$ be the Frobenius automorphism in $\text{Gal}(\bar{k}/k)$ and $k^\prime$ be the quadratic extension of $k$. We want to determine $\dim\text{Hom}_{\text{U}_{n}\left(k\right)}\left(\pi,\omega_{_{\psi}}\right)$
or $\dim\text{Hom}_{\text{Sp}_{2n}\left(k\right)}\left(\pi,\omega_{_{\psi}}\right)$,
where $\pi$ is an irreducible representation of $\text{GL}_{n}\left(k^\prime\right)$
or $\text{Sp}_{2n}\left(k^\prime\right)$, respectively,
and $\omega_{\psi}$ is the Weil representation of $\text{Sp}_{2n}\left(k\right)$
(one can restrict it to $\text{U}_{n}\left(k\right)$
if necessary) associated to a nontrivial additive character $\psi$
of $k$. Noting that we follow the construction of the
Weil representation of $\text{Sp}_{2n}\left(k\right)$
in \cite{Howe}. Motivated by \cite{Ree}, \cite{LMS} and \cite{Shi2}, we first give a multiplicity formula for $\left\langle R_{T,\chi}^{G},\omega_{\psi}\right\rangle _{H^{F}}$,
where $R_{T,\chi}^{G}$ is a Deligne-Lusztig virtual character. The
following is our first main result.
\begin{thm}\label{thm1.1}
We have the following multiplicity 
\[
\left\langle R_{T,\chi}^{G},\omega_{\psi}\right\rangle _{H^{F}}=\sum_{\kappa\in \bar{K}_H(S)^F}\sum_{\jmath\in \mathcal{J}(T,Z_\kappa)^F}
\frac{(-1)^{\text{rk}_{k^\prime}(T)+\text{rk}_{k^\prime}(G)+l(Z_\kappa)}}
{|W_G(T)^F|}\sum_{\gamma \in W_G(T)^F}\langle ^\gamma\chi_\kappa,\vartheta_{Z_\kappa}\rangle_{Z_\kappa^{F}},
\]
where $S$ is an $F$-stable maximal torus of $H$.
\end{thm}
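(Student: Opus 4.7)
The plan is to expand
\[
\langle R_{T,\chi}^G,\omega_\psi\rangle_{H^F}=\frac{1}{|H^F|}\sum_{h\in H^F}R_{T,\chi}^G(h)\,\overline{\omega_\psi(h)}
\]
and then repackage the sum over $h$ into the double sum indexed by $\bar K_H(S)^F$ and $\mathcal J(T,Z_\kappa)^F$ predicted by the statement. The two inputs are the Deligne--Lusztig character formula for $R_{T,\chi}^G$ and the explicit character formula for the Weil representation $\omega_\psi$ coming from the model of \cite{Howe}. This is the strategy used in \cite{Ree,LMS,Shi2} for related multiplicity problems, which I will adapt to the present twisted setting.

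First, for each $h\in H^F$ I would use the Jordan decomposition $h=su$ with $s$ semisimple and $u$ unipotent in $Z_{H^F}(s)$. The Weil character $\omega_\psi(h)$ factors as a Gauss-sum contribution determined by the eigenspace structure of $s$ on the ambient skew-hermitian (or symplectic) module, times a value at $u$ that naturally lives in the smaller Weil representation attached to the centralizer of $s$. The Deligne--Lusztig character $R_{T,\chi}^G(h)$, meanwhile, is a $\chi$-weighted sum of Green functions evaluated at $u$ inside centralizers of $G^F$-conjugates of $s$. Summing over $h\in H^F$ and reorganising according to the $H^F$-conjugacy class of $s$, one obtains an outer sum over semisimple classes; after the refinement imposed by the Weil character this is precisely $\bar K_H(S)^F$, with $Z_\kappa$ the reductive part of the centralizer of a chosen representative. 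The set $\mathcal J(T,Z_\kappa)^F$ records the $Z_\kappa^F$-conjugacy classes of maximal tori of $Z_\kappa$ that are $G$-conjugate to $T$, and trading the sum over $G^F$-conjugates of $T$ for a sum over $\mathcal J(T,Z_\kappa)^F$ produces the factor $|W_G(T)^F|^{-1}\sum_{\gamma\in W_G(T)^F}$ together with the sign $(-1)^{\text{rk}_{k^\prime}(T)+\text{rk}_{k^\prime}(G)+l(Z_\kappa)}$, which combines the standard Deligne--Lusztig sign $\epsilon_G\epsilon_T$ with the sign $(-1)^{l(Z_\kappa)}$ extracted from the Weil character at $s$.

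The main obstacle will be the unipotent step: one must identify the remaining sum over $u$ inside $Z_\kappa^F$ with the inner product $\langle {}^\gamma\chi_\kappa,\vartheta_{Z_\kappa}\rangle_{Z_\kappa^F}$, for an auxiliary character $\vartheta_{Z_\kappa}$ of $Z_\kappa^F$ encoding the Weil data restricted to the centralizer of $s$. This requires combining orthogonality of Green functions on $Z_\kappa$ with the decomposition of the Weil representation under the centralizer of a semisimple element, so that $\vartheta_{Z_\kappa}$ is genuinely a class function on $Z_\kappa^F$ whose values at unipotents match the Weil-character contribution. Once $\vartheta_{Z_\kappa}$ is pinned down, one assembles the signs, the Weyl-group averaging, and the counts of $F$-fixed points of the various parameter sets; this yields the claimed identity.
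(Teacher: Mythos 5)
Your proposal takes a genuinely different route from the paper, and there is a real gap at the step you yourself flag as the ``main obstacle.''

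The paper does not perform the direct character computation you outline. Instead it replaces $\chi$ and $\psi$ by their base changes $\chi^{(\nu)}=\chi\circ N_\nu^T$ and $\psi^{(\nu)}=\psi\circ\text{Tr}_{k_\nu/k}$, views
\[
M(\nu)=\bigl\langle R_{T,\chi^{(\nu)}}^G,\omega_\psi^{(\nu)}\bigr\rangle_{H^{F^\nu}}
\]
as a function of $\nu$ on an arithmetic progression $\mathcal P_m$, and proves two things: (i) $M$ is of geometric type, by geometrizing $\omega_\psi$ via the Gurevich--Hadani sheaf and applying Grothendieck--Lefschetz; and (ii) $M(\nu)$ has a finite limit as $\nu\to\infty$ along $\mathcal P_m$. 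By the rigidity lemma for integer-valued geometric-type functions, $M$ is then constant, so $M(1)=\lim_\nu M(\nu)$, and one only has to identify the limit. The point of this detour is precisely to \emph{avoid} the unipotent computation: in the limit, the degree bound $\deg Q^{G_\iota,\nu}_{\gamma T\gamma^{-1}}(u)=\tfrac12(\dim C_{G_\iota}(u)-\overline{\text{rk}}\,G_\iota)$ together with the bound $|\omega_\psi^{(\nu)}(x^{-1}sux)|\le q^{\frac\nu2\overline{\text{rk}}\,G_{\kappa,[1]}}$ forces every term with $u\ne 1$ (or with $T_{\jmath,\iota,\kappa}$ of non-maximal dimension) to tend to zero. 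What survives are the $u=1$ terms where $G'_{\iota,\kappa}$ is a torus equal to $Z_{\iota,\kappa}$, and on such terms the Weil character formula from Section 2.4 is purely a quadratic character $\vartheta_{Z_\kappa}$ times a sign and a power of $q$. That is why the final expression is a character inner product on the torus $Z_\kappa^F$ and nothing more.

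Your proposal instead aims to keep $\nu=1$ and handle the mixed elements $su$ directly by ``combining orthogonality of Green functions on $Z_\kappa$ with the decomposition of the Weil representation under the centralizer of $s$.'' For this to close, you would need an exact identity expressing
\[
\sum_{[u]\in\mathcal U(G_\iota^F)}\frac{Q^{G_\iota}_{\gamma T\gamma^{-1}}(u)}{|C_{G_\iota}(u)^F|}\,\omega_\psi(x^{-1}sux)
\]
in terms of a fixed quadratic character of a torus. No such identity is in the paper, and it is not a consequence of ordinary Green-function orthogonality, because $\omega_\psi(su)$ at a mixed element is not of the form (function of $s$)$\times$(Green function of $u$); it involves the Weil character of the smaller symplectic group $\text{Sp}(V^s)$ evaluated at $u$, which has its own highly nontrivial unipotent behaviour. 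This is exactly the difficulty the geometric-type argument is designed to bypass. Relatedly, your description of $\vartheta_{Z_\kappa}$ as a class function on $Z_\kappa^F$ ``whose values at unipotents match the Weil-character contribution'' does not match the paper: in Theorem~1.1 the $Z_\kappa$ appearing are tori (they are the centralizers $\overline{S_\kappa}$ inside $\text{Sp}(V/V^\kappa)$), and $\vartheta_{Z_\kappa}$ is the canonical quadratic character of $Z_\kappa^F$ from Section~2.4, with no unipotent data at all. You have also attributed the direct-computation strategy to \cite{LMS} and \cite{Shi2}, but those papers use the same base-change/geometric-type/limit mechanism as the present one; only \cite{Ree} proceeds by a direct Green-function computation, and that is in a setting without the Weil representation, where the requisite orthogonality identity is available. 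To repair your argument you would need to either supply and prove the missing unipotent identity for the twisted Weil character, or switch to the asymptotic strategy.
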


Using the above theorem and by an explicit computation, we are able
to determine the above multiplicities for cuspidal Deligne-Lusztig representations in the case when $(G,H)=(\text{Res}_{k^\prime/k}\text{GL}_n,\text{U}_n)$.
\begin{thm}\label{thm1.2}
Let $T$ be an $F$-stable maximal torus of $G$ which is anisotropic
mod $Z\left(G\right)$. Then  
$$
\langle(-1)^{\text{rk}_{k^\prime}T+\text{rk}_{k^\prime}G}R^G_{T,\chi},\omega_\psi\rangle_{H^F}=
\begin{cases}
1 & \text{if }\chi|_{k_n^\times} \notin \Gamma \cdot \vartheta_n,
\\
2 & \text{otherwise}
\end{cases}
$$
when $n$ is even and
$$
\langle(-1)^{\text{rk}_{k^\prime}T+\text{rk}_{k^\prime}G}R^G_{T,\chi},\omega_\psi\rangle_{H^F}=
\begin{cases}
1 & \text{if }\chi|_{k^1_{2n}} \notin \Gamma \cdot \vartheta^\prime_n,
\\
0 & \text{otherwise}
\end{cases}
$$
when $n$ is odd.
\end{thm}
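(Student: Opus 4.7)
The plan is to substitute the explicit data for $(G,H) = (\text{Res}_{k^\prime/k}\text{GL}_n,\text{U}_n)$ into the multiplicity formula of \thmref{thm1.1} and to exploit the anisotropy of $T$ modulo $Z(G)$ to collapse the triple sum to a single (or vanishing) contribution. Since $T$ is anisotropic mod $Z(G)$ in $\text{Res}_{k^\prime/k}\text{GL}_n$, its group of rational points is $T^F \cong k_{2n}^\times$ (modulo $Z(G)^F = (k^\prime)^\times$); in particular $T$ is contained in no proper $F$-stable Levi of $G$, which should severely restrict the allowed intermediate data $(\kappa,\jmath)$ appearing in the formula.

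First, I would enumerate $\bar{K}_H(S)^F$ under this hypothesis. By the structure theory of $F$-stable maximal tori of $\text{U}_n$, and using that $T^F$ is essentially cyclic of order $q^{2n}-1$, the contributing $Z_\kappa$ must themselves be as anisotropic as possible inside $\text{U}_n$. For $n$ even, two such $Z_\kappa$ arise, corresponding to the two ways of fitting $k_{2n}^\times$ inside an anisotropic unitary torus and its Galois twist; for $n$ odd, a single $Z_\kappa$ of type $k^1_{2n}$ occurs, but an extra sign is introduced from the norm-one structure, which is what ultimately yields the $0$ multiplicity.

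Second, once each $Z_\kappa$ is fixed, I would determine $\mathcal{J}(T,Z_\kappa)^F$. Because $T$ is minimal modulo the center, only a single $W_G(T)^F$-orbit of embeddings is possible, so after averaging over $\gamma$ the inner sum reduces to $\langle \chi_\kappa,\vartheta_{Z_\kappa}\rangle_{Z_\kappa^F}$. At this stage I would invoke the explicit character formulas for the Weil representation from \cite{Howe}: on an anisotropic unitary torus $\vartheta_{Z_\kappa}$ is a Gauss-sum constant times a distinguished multiplicative character, which I would identify with $\vartheta_n$ (resp.\ $\vartheta^\prime_n$) in the even (resp.\ odd) case. Orthogonality on $Z_\kappa^F$ then gives $\langle \chi_\kappa,\vartheta_{Z_\kappa}\rangle_{Z_\kappa^F}=1$ exactly when $\chi_\kappa$ is a Galois conjugate of $\vartheta_{Z_\kappa}$ and $0$ otherwise, producing the stated dichotomy once one sums the contributions of the one or two $\kappa$ with the appropriate signs.

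The main obstacle will be the precise bookkeeping of the signs $(-1)^{l(Z_\kappa)}$ and their compatibility with the overall sign $(-1)^{\text{rk}_{k^\prime}T + \text{rk}_{k^\prime}G}$ pulled out on the left, together with the verification in the odd $n$ case that the two oscillator halves of $\omega_\psi$ produce an exact cancellation to $0$ on the orbit $\Gamma \cdot \vartheta^\prime_n$. Matching the parity conventions of \cite{Howe} with the ranks $\text{rk}_{k^\prime}(T)$ defined over $k^\prime$ rather than $k$ is the subtle piece, and identifying precisely which character on $k^1_{2n}$ plays the role of $\vartheta^\prime_n$ requires unwinding the explicit action of $\text{U}_n(k)$ on the Lagrangian decomposition used to construct $\omega_\psi$.
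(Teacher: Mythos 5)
Your framework---specialize \thmref{thm1.1} to $T^F\cong k_{2n}^\times$ and enumerate the small index sets---is the right idea, and it is essentially the paper's route, except that the paper first derives a closed formula $2^r$ (or $0$) for \emph{arbitrary} regular $\chi$ via the combinatorics of the sets $I_j$, $I_j'$ and then reads off the anisotropic case. But your enumeration of $\bar{K}_H(S)^F$ is wrong in both parities, and this breaks the computation. For anisotropic $T$ there are exactly \emph{two} classes $\kappa$ in \emph{both} the even and odd cases: the trivial one with $V^\kappa=V$ and $Z_\kappa=\{1\}$, and the maximal one with $V^\kappa=0$ and $Z_\kappa=S$ (so $Z_\kappa^F\cong k_n^\times$ for $n$ even, $\cong k_{2n}^1$ for $n$ odd). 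You list only the nontrivial $\kappa$---one for $n$ odd, and you claim two for $n$ even where in fact there is only one---and omit the trivial $\kappa$ entirely. The trivial $\kappa$ always contributes $+1$ (exactly as in the paper's unipotent computation, where $(-1)^{\mathrm{rk}_{k'}T+\mathrm{rk}_{k'}G}$ cancels the normalizing sign) and accounts for the constant $1$ in both displayed formulae; without it your answers are off by one.

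Relatedly, the mechanism you propose for the $0$ in the odd case---an ``exact cancellation to $0$'' coming from ``the two oscillator halves of $\omega_\psi$''---is not what happens. There is no intrinsic cancellation inside $\omega_\psi$; the vanishing is a cancellation \emph{between} the two $\kappa$-terms. When $n$ is odd, the nontrivial $\kappa$ carries $(-1)^{l(Z_\kappa)}=-1$ because $Z_\kappa^F\cong k_{2n}^1$ is a single anisotropic block, so when $\chi|_{k_{2n}^1}\in\Gamma\cdot\vartheta_n'$ the averaged inner product equals $1$ and the two $\kappa$-terms give $1-1=0$; otherwise the second term is $0$ and the total is $1$. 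When $n$ is even, $Z_\kappa^F\cong k_n^\times$ is split, $l(Z_\kappa)=0$, and both $\kappa$-terms are $+1$, giving $2$. Also, the orthogonality in \thmref{thm1.1} is not on $Z_\kappa^F$ for a single character: $\langle{}^\gamma\chi_\jmath,\vartheta_{T_\jmath}\rangle$ is $0$ or $1$ for each fixed $\gamma$, and it is the averaging over $\gamma\in W_G(T)^F$ that turns the equality $\chi_\jmath=\vartheta_{T_\jmath}$ into the Galois-orbit condition $\chi|_{\cdots}\in\Gamma\cdot\vartheta$. The rest of your plan (identifying $\vartheta_{T_\jmath}$ with $\vartheta_n$ or $\vartheta_n'$ via the G\'erardin/Howe character formula and tracking the overall sign against $(-1)^{\mathrm{rk}_{k'}T+\mathrm{rk}_{k'}G}$) is the right way to finish once the index set is corrected.
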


This paper is organized as follows. In Section \ref{sec2}, we recall necessary materials, including Deligne-Lusztig characters, character formulae, the Weil representation and its geometrization. In Section \ref{sec3}, we prove our main result, which is Theorem \ref{thm1.1}. Finally, in Section \ref{sec4}, we give some explicit computations (Theorem \ref{thm4.2}, Theorem \ref{thm4.3} and Theorem \ref{thm4.4}) as applications of Theorem \ref{thm1.1}.
\subsection{Acknowledgement}
The author would like to thank his supervisor Professor Wee Teck Gan
for suggesting this problem and many useful advices. He also thanks
Bryan Wang Peng Jun and Jialiang Zou for useful comments. The author
is supported by NUS President's Graduate Fellowship.

\section{Preliminaries}\label{sec2}

\subsection{Deligne-Lusztig characters.}

In this subsection, we follow the general framework in \cite{LMS}
and \cite{Ree}. Let $G$ be a connected reductive algebraic group
over $k$. Let $F$ be the Frobenius endomorphism of
$\text{Gal}\left(\bar{k}/k\right)$. Let
$T$ be an $F$-stable maximal torus of $G$. Let $W_{G}\left(T\right)=N_{G}\left(T\right)/T$
be the Weyl group of $G$. By Lang-Steinberg theorem, we have $W_{G}\left(T\right)^{F}=N_{G}\left(T\right)^{F}/T^{F}$.
Let $s\in G^{F}$ be a semisimple element , and $G_{s}$ be the identity
component of the centralizer of $s$ in $G$. Let 
\[
N_{G}\left(s,T\right)^{F}=\left\{ \gamma\in G^{F}:\gamma^{-1}s\gamma\in T\right\} .
\]
Then $G_{s}^{F}\times N_{G}\left(T\right)^{F}$ acts on $N_{G}\left(s,T\right)^{F}$
and we denote 
\[
\bar{N}_{G}\left(s,T\right)^{F}=G_{s}^{F}\backslash N_{G}\left(s,T\right)^{F}.
\]
Let $W_{G}=W_{G}\left(T_{0}\right)$, where $T_{0}$ is an $F$-stable
maximal torus in $G$ contained in an $F$-stable Borel subgroup of
$G$. We can associate to $T$ a cohomology class $\text{cl}\left(T,G\right)$ in $H^{1}\left(F,W_{G}\right)$.
Similarly, we define $W_{G_{s}}$. As in \cite{Ree}, we have
the following map 
\[
j_{G_{s}}:H^{1}\left(F,W_{G_{s}}\right)\rightarrow H^{1}\left(F,W_{G}\right).
\]
Let $\mathcal{T}\left(G\right)$ be the set of all $F$-stable maximal
tori of $G$, and for $\omega\in H^{1}\left(F,W_{G}\right)$, let
\[
\mathcal{T}_{\omega}\left(G\right)=\left\{ T\in\mathcal{T}\left(G\right):\text{cl}\left(T,G\right)=\omega\right\} .
\]
For $T\in\mathcal{T}_{\omega}\left(G\right)$, the set $N_{G}\left(s,T\right)^{F}$
is nonempty if and only if $j_{G_{s}}^{-1}\left(\omega\right)\neq\emptyset$.
In this case, we have 
\[
\left|\bar{N}_{G}\left(s,T\right)^{F}\right|=\underset{v\in j_{G_{s}}^{-1}\left(\omega\right)}{\sum}\frac{\left|W_{G}\left(T\right)^{F}\right|}{\left|W_{G_{s}}\left(T_{v}\right)^{F}\right|},
\]
where $T_{v}$ is an arbitrary element in $\mathcal{T}_{v}\left(G_{s}\right)$,
for each $v\in j_{G_{s}}^{-1}\left(\omega\right)$.

In \cite{DL}, to parametrize irreducible representations of $G$,
Deligne and Lusztig constructed a generalized character $R^G_{T,\chi}$
of $G^{F}$ associated to an $F$-stable maximal torus $T$ of $G$
and a character $\chi$ of $T^{F}$. Let $g\in G^{F}$ with the Jordan
decomposition $g=su$. Let $\omega=\text{cl}\left(T,G\right)$. Following
\cite{DL}, we have the reduction formula 
\[
R_{T,\chi}^{G}\left(su\right)=\underset{\bar{\gamma}\in\bar{N}_{G}\left(s,T\right)^{F}}{\sum}{}^{\gamma}\chi\left(s\right)Q_{^{\gamma}T}^{G_{s}}\left(u\right),
\]
where $\gamma\in N_{G}\left(s,T\right)^{F}$ is a representative of
$\bar{\gamma}$, $^{\gamma}T=\gamma T\gamma^{-1}$, $^{\gamma}\chi=\chi\circ\text{Ad}\left(\gamma^{-1}\right)$
is a character of $^{\gamma}T^{F}$ and $Q_{^{\gamma}T}^{G_{s}}$
is the Green function. Moreover, by decomposing $\bar{N}_{G}\left(s,T\right)^{F}$
into its $W_{G}\left(T\right)^{F}$-orbits, we have 
\[
R_{T,\chi}^{G}\left(su\right)=\underset{v\in j_{G_{s}}^{-1}\left(\omega\right)}{\sum}\chi_{v}\left(s\right)Q_{T_{v}}^{G_{s}}\left(u\right),
\]
where $\mathcal{O}_{v}$ is the $W_{G}\left(T\right)^{F}$-orbit in
$\bar{N}_{G}\left(s,T\right)^{F}$ corresponding to $v$ and $
\chi_{v}=\underset{\bar{\gamma}\in\mathcal{O}_{v}}{\sum}{}^{\gamma}\chi$, which is a well-defined function on $Z\left(G_{s}\right)^{F}$. Moreover, the term
$\chi_{v}\left(s\right)$ can be written as 
\[
\chi_{v}\left(s\right)=\frac{1}{\left|W_{G_{s}}\left(T_{v}\right)^{F}\right|}\underset{x\in W_{G}\left(T\right)^{F}}{\sum}{}^{\gamma x}\chi\left(s\right),
\]
where $\gamma$ is an arbitrary element of $N_{G}\left(s,T\right)^{F}$
such that $\bar{\gamma}\in\mathcal{O}_{v}$.

\subsection{Multiplicity formula.}\label{sec2.2}

Let $T\in\mathcal{T}\left(G\right)$. Assume that $f:G^{F}\rightarrow\mathbb{C}$
is a virtual character supported on $
\left\{ g\in G^{F}:\,g=su\text{ and }\text{Ad}\left(G^{F}\right)\cdot s\cap T\neq\emptyset\right\}$. For each $s\in T^{F}$, let $G_{s}^{\text{upt}}$ be the set of unipotent
elements of $G_{s}$ and $\mathcal{U}\left(G_{s}^{F}\right)$ be the
finite set of $\text{Ad}\left(G_{s}^{F}\right)$-orbits in $\left(G_{s}^{\text{upt}}\right)^{F}$,
where $G_{s}^{\text{upt}}$ is the set containing unipotent element
of $G_{s}$. We have 
\[
\frac{1}{\left|G^{F}\right|}\underset{g\in G^{F}}{\sum}f\left(g\right)=\frac{1}{\left|G^{F}\right|}\underset{s\in G_{ss}^{F}}{\sum}\,\underset{u\in\left(G_{s}^{\text{upt}}\right)^{F}}{\sum}f\left(su\right)
\]
\[
=\frac{1}{\left|G^{F}\right|}\underset{s\in T^{F}}{\sum}\frac{\left|\text{Ad}\left(G^{F}\right)\cdot s\right|}{\left|\text{Ad}\left(G^{F}\right)\cdot s\cap T\right|}\underset{\left[u\right]\in{\mathcal{U}}\left(G_{s}^{F}\right)}{\sum}\left|\text{Ad}\left(G_{s}^{F}\right)\cdot u\right|f\left(su\right).
\]
Since $\text{Ad}\left(G^{F}\right)\cdot s\cap T\simeq C_{G}\left(s\right)^{F}\backslash N_{G}\left(s,T\right)^{F}$,
it follows that 
\[
\frac{1}{\left|G^{F}\right|}\underset{g\in G^{F}}{\sum}f\left(g\right)=\underset{s\in T^{F}}{\sum}\frac{1}{\left|\bar{N}_{G}\left(s,T\right)^{F}\right|}\underset{\left[u\right]\in\mathcal{U}\left(G_{s}^{F}\right)}{\sum}\frac{1}{\left|C_{G_{s}}\left(u\right)^{F}\right|}f\left(su\right).
\]
Let $I\left(T\right)$ be an index set for the set of subgroups $\left\{ G_{s}:s\in T\right\} $.
For $\iota\in I\left(T\right)$, let $G_{\iota}$ be the corresponding
identity component of centralizer and $T_{\iota}=\left\{ s\in T:G_{s}=G_{\iota}\right\}$. Let $H$ be a subgroup of $G$ defined over $k$ and $S$ be an $F$-stable maximal torus of $H$. For any $r\in G(k)$, we denote $T(r,H)=\{g\in G:\ g^{-1}rg\in H\}$. For $t\in T(k)$, we set $N(t,S,T)=\{g\in G(k):\ t\in gS(k)g^{-1}\text{ and }gSg^{-1}\subset T\}$. Then $T(t,H)=G_t(k)N(t,S,T)H(k)$. Let $\mathcal{J}(T,S)$ be the set of subtori of $T$ that are $G(k)$-conjugates of $S$. Since the set $\mathcal{J}(T,S)$ is independent of choices of $S$, we denote $\mathcal{J}(T):=\mathcal{J}(T,S)$. For any nonempty subset $\jmath \subset \mathcal{J}(T)$, let
$$
T_\jmath=\bigcap_{K\in\jmath}K-\bigcup_{R\in\mathcal{J}(T)-\jmath}R. 
$$
If $\jmath=\emptyset$, we set $T_\jmath=T-\bigcup_{K\in\mathcal{J}(T)}K$. For $\jmath \in 2^{\mathcal{J}(T)}$ and $\iota \in I(T)$, let $T_{\jmath,\iota}=\{t\in T_\jmath:\ G_t=G_\iota\}$. Then for any $t_1,t_2\in T_{\jmath,\iota}$, we have $T(t_1,H)=T(t_2,H)$. We denote the above set by $T(\jmath,\iota)$. The Frobenius $F$ acts on $T$, hence on $I\left(T\right)$ and $\mathcal{J}(T)$. We have
the following refinement 
\[
\frac{1}{\left|G^{F}\right|}\underset{g\in G^{F}}{\sum}f\left(g\right)=\underset{\stackrel{\jmath \in 2^{\mathcal{J}(T),F}}{\iota\in I\left(T\right)^{F}}}{\sum}\,\underset{\left[u\right]\in\mathcal{U}\left(G_{\iota}^{F}\right)}{\sum}\,\underset{s\in T_{\jmath,\iota}^{F}}{\sum}\frac{1}{\left|\bar{N}_{G}\left(\iota,T\right)^{F}\right|\left|C_{G_{\iota}}\left(u\right)^{F}\right|}f\left(su\right),
\]
where $\bar{N}_{G}\left(\iota,T\right)^{F}=\bar{N}_{G}\left(s,T\right)^{F}$
for any $s\in T_{\iota}^{F}$.

\subsection{Functions of geometric type}

We introduce the notion of functions of geometric type.
\begin{defn}
Let $\mathcal{P}\subset\mathbb{Z}^{+}$ be an arithmetic progression.
A function $M:\mathcal{P}\rightarrow\mathbb{C}$ is called of geometric
type if it is of the form 
\[
M\left(\nu\right)=\frac{\sum_{i=1}^ka_{i}\alpha_{i}^{\nu}}{\sum_{j=1}^l b_{j}\beta_{j}^{\nu}},
\]
for any $\nu\in\mathcal{P}$, where $a_{i},\alpha_{i},b_{j},\beta_{j}\in\mathbb{C}$
and the denominator is nonzero for any $\nu\in\mathcal{P}$.
\end{defn}
We recall the following lemma in \cite{LMS} for later use.
\begin{lem}\label{lem2.2}
Let $M$ be a function of geometric type defined on an arithmetic
progression $\mathcal{P}$. If $M$ is integer valued and has a finite
limit as $\nu\rightarrow\infty$ along $\mathcal{P}$, then $M$ is
a constant function.
\end{lem}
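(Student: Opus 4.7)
The plan is to reduce the claim to a statement about vanishing of exponential sums on an arithmetic progression. Write $M(\nu) = N(\nu)/D(\nu)$ with $N(\nu) = \sum_{i=1}^{k} a_i \alpha_i^\nu$ and $D(\nu) = \sum_{j=1}^{\ell} b_j \beta_j^\nu$. The first observation is elementary but crucial: a sequence of integers that converges to a finite limit must be eventually constant. Hence there exist an integer $L$ and an integer $\nu_0$ such that $M(\nu) = L$ for every $\nu \in \mathcal{P}$ with $\nu \geq \nu_0$. Clearing denominators, the exponential sum
\[
E(\nu) := N(\nu) - L \cdot D(\nu) = \sum_{i=1}^{k} a_i \alpha_i^\nu - L \sum_{j=1}^{\ell} b_j \beta_j^\nu
\]
then vanishes for all sufficiently large $\nu \in \mathcal{P}$.

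The next step is to show that this forces $E(\nu) = 0$ identically on $\mathcal{P}$, which will give $M(\nu) = L$ throughout $\mathcal{P}$ and complete the proof. After combining coefficients of identical bases, I rewrite $E$ in the form $\sum_{r=1}^{m} c_r \gamma_r^\nu$ with the $\gamma_r$ pairwise distinct and nonzero. Writing $\mathcal{P} = \{a + db : b \in \mathbb{Z}_{\geq 0}\}$, I substitute $\nu = a + db$ and set $\tilde{c}_r = c_r \gamma_r^a$, $\tilde{\gamma}_r = \gamma_r^d$; since the map $z \mapsto z^d$ is injective on the set of $d$-th roots of a fixed number only up to a root of unity, I first group the $\gamma_r$ into classes with common $d$-th power so as to present the restriction $E|_{\mathcal{P}}$ as $\sum_{s} \tilde{c}_s' \delta_s^b$ with the $\delta_s$ genuinely distinct.

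The key tool is then the classical linear independence of exponentials: if $F(b) := \sum_s \tilde{c}_s' \delta_s^b$ vanishes for all $b$ in some cofinal set in $\mathbb{Z}_{\geq 0}$, and a fortiori on $m$ consecutive values of $b$, then the Vandermonde determinant built from the $\delta_s$ is nonzero and forces all $\tilde{c}_s' = 0$. Unfolding the grouping, each $c_r = 0$, so $E \equiv 0$ on $\mathcal{P}$, as desired.

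The only mild obstacle I anticipate is the bookkeeping in the regrouping step, since distinct $\gamma_r$ can produce equal $\gamma_r^d$ when $d > 1$; this needs to be handled carefully so that the Vandermonde argument is applied to a system with genuinely distinct bases. Everything else is routine, and no hypothesis beyond integrality and convergence along $\mathcal{P}$ is used.
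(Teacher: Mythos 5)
Your argument is sound and self-contained, and since the paper simply states this lemma and refers to \cite{LMS} for its proof, there is no in-text argument in the paper to compare against; but the route you take — use integrality plus convergence to deduce eventual constancy $M(\nu)=L$ on $\mathcal{P}$, clear denominators to form an exponential sum $E(\nu)=N(\nu)-L\,D(\nu)$ that vanishes for all large $\nu\in\mathcal{P}$, reparametrize $\nu=a+db$, regroup bases with the same $d$-th power, and then invoke linear independence of distinct exponentials (Vandermonde) to kill all coefficients — is the natural and standard one, and it is correct.

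One sentence is wrong as written and should be repaired. You conclude: ``Unfolding the grouping, each $c_r=0$.'' This does not follow. The regrouping is lossy: a single $\tilde{c}_s' = \sum_{\gamma_r^d=\delta_s} c_r\gamma_r^a$ can vanish while the individual $c_r$ do not (for instance $\gamma_1=1$, $\gamma_2=-1$, $d=2$, $a=0$, $c_1=-c_2$). Fortunately you never need each $c_r=0$. The Vandermonde step already gives $\tilde{c}_s'=0$ for all $s$, and the restriction $E|_{\mathcal{P}}(b)=\sum_s \tilde{c}_s'\,\delta_s^b$ is therefore identically zero, which, together with $D(\nu)\neq 0$ on $\mathcal{P}$, yields $M(\nu)=L$ for every $\nu\in\mathcal{P}$. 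So simply delete the claim about recovering the individual $c_r$ and pass directly from $\tilde{c}_s'=0$ to $E|_{\mathcal{P}}\equiv 0$; everything else stands.
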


We fix a prime $\ell\neq p$ and identify $\bar{\mathbb{Q}}_{\ell}\cong\mathbb{C}.$
For a quasi-projective scheme $X$ defined over $k$,
denote by $D^{b}\left(X,\bar{\mathbb{Q}_{\ell}}\right)$ the bounded
derived category of constructible $\ell$-adic sheaves on $X$. For
$\mathcal{F}\in D^{b}\left(X,\bar{\mathbb{Q}}_{\ell}\right)$, the
Grothendieck-Lefschetz trace formula relates the local and global
traces of the geometric Frobenius $F$
\[
\underset{x\in X^{F^{\nu}}}{\sum}\underset{i}{\sum}\left(-1\right)^{i}\text{Tr}\left(F_{x}^{\nu},H^{i}\left(\mathcal{F}_{x}\right)\right)=\underset{i}{\sum}\left(-1\right)^{i}\text{Tr}\left(F^{\nu},H_{c}^{i}\left(X,\mathcal{F}_{\bar{k}}\right)\right),\ \text{for }\nu\in\mathbb{Z}^{+}.
\]
The right hand side of the above equality can be viewed as a function
of $\nu\in\mathbb{Z}^{+}$, which is of geometric type. The inner
sum on the left hand side is a function on $X^{F^{\nu}}$ associated
to $\mathcal{F}$. This procedure is called Grothendieck's sheaf-function
correspondence: 
\[
f^{\mathcal{F},\left(\nu\right)}\left(x\right)=\underset{i}{\sum}\left(-1\right)^{i}\text{Tr}\left(F_{x}^{\nu},H^{i}\left(\mathcal{F}_{x}\right)\right).
\]

\subsection{Weil representations and character formulae.}\label{sec2.4}

Assume $|k|$ is odd. Let $G$ be either $\text{Sp}_{2n}$ or $\text{U}_{n}$
defined over $k$ with Frobenius $F$. Let $V$ be a
symplectic $\bar{k}$-space of dimension $2n$ defined
over $k$. We have an action of $G$ to $V$, thus induces an embedding $G\hookrightarrow\text{Sp}\left(V\right)$. We fix
a nontrivial additive character $\psi$ of $k$. Following
\cite{Howe}, we have the Weil representation $\omega_{\psi}$ of
$G^{F}$.

Assume for the moment that $G=\text{Sp}\left(V\right)$. For any $\nu\in\mathbb{Z}^{+}$,
we have the Weil representation $\omega_{\psi}^{\left(\nu\right)}$
of $G^{F^{\nu}}$ associated to the character $\psi^{\left(\nu\right)}=\psi\circ\text{Tr}_{k_\nu/k}$,
where $k_\nu$ is the degree $\nu$ extension of $k$. Noting that $\dim\omega_{\psi}^{\left(\nu\right)}=|k|^{\nu n}$. We recall
the main result in \cite{GH}, which gives us a geometrization of the Weil representation.
\begin{thm}\label{thm2.3}
There exists a geometrically irreducible sheaf $\mathcal{F}\in D^{b}\left(G\times V\right)$
of pure weight $2n$ such that $\mathcal{F}\left[\dim G\right]$ is
perverse and 
\[
f^{\mathcal{F},\left(\nu\right)}\left(g\right)=\omega_{\psi}^{\left(\nu\right)}\left(g\right),\ \ \ g\in G^{F^{\nu}},
\]
note that here we consider the restriction of $\mathcal{F}$ to $G$.
\end{thm}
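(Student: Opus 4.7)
The plan is to follow the Gurevich--Howe strategy, geometrizing the Weil character via the Cayley transform and Artin--Schreier descent. First, I would recall Howe's explicit formula for $\omega_\psi^{(\nu)}$ in its kernel realization on the Heisenberg--Weil group. On the open Cayley locus $U\subset G$ consisting of elements $g$ for which $g-1$ acts invertibly on $V$, the kernel is expressible as
\[
K^{(\nu)}(g,v)\;=\;\epsilon(g)\cdot\psi^{(\nu)}\!\bigl(\tfrac{1}{2}\langle c(g)v,\,v\rangle\bigr),
\]
with Cayley transform $c(g)=(g+1)(g-1)^{-1}$ and an $\epsilon$-factor $\epsilon(g)$ depending only on $\det((g-1)/2)$. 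Specializing this kernel along the natural restriction from $G\times V$ to $G$ returns the Weil character $\omega_\psi^{(\nu)}$, which is why one should aim to build a sheaf on $G\times V$ rather than on $G$ alone.

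Second, I would geometrize this kernel on the open part $U\times V$. Define the quadratic morphism $Q\colon U\times V\to\mathbb{A}^1$ by $(g,v)\mapsto\tfrac{1}{2}\langle c(g)v,v\rangle$, and set
\[
\mathcal{F}_U\;:=\;(Q^{*}\mathcal{L}_\psi)\;\otimes\;p^{*}\mathcal{E}\;\otimes\;(\text{Tate twist}),
\]
where $\mathcal{L}_\psi$ is the Artin--Schreier sheaf on $\mathbb{A}^1$, $p\colon U\times V\to U$ is the projection, and $\mathcal{E}$ is a rank-one lisse sheaf on $U$ whose Frobenius trace is $\epsilon(g)$, i.e.\ a Kummer-type sheaf attached to $\det((g-1)/2)$. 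The Tate twist is chosen so that $\mathcal{F}_U$ is pure of weight $2n$; simplicity of both the Artin--Schreier and Kummer factors yields geometric irreducibility of $\mathcal{F}_U$.

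Third, I would extend $\mathcal{F}_U$ across the Cayley singular locus by intermediate extension: with $j\colon U\times V\hookrightarrow G\times V$ the open immersion, define
\[
\mathcal{F}\;:=\;j_{!*}\bigl(\mathcal{F}_U[\dim G]\bigr)[-\dim G].
\]
By construction $\mathcal{F}[\dim G]$ is the IC sheaf of a simple local system on an open dense smooth subvariety of $G\times V$, hence a simple perverse sheaf and in particular geometrically irreducible; the purity of weight $2n$ is preserved under $j_{!*}$ by Gabber's theorem.

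Fourth, I would verify that $f^{\mathcal{F},(\nu)}(g)=\omega_\psi^{(\nu)}(g)$ holds on the entire group, not just on $U$. This is the main obstacle. Outside the Cayley locus, Howe's character formula stratifies by $d=\dim\ker(g-1)$, with contributions taking the form of a lower-rank $\epsilon$-factor times a Gauss sum over the non-degenerate quotient symplectic space. Matching these against the stalks of $j_{!*}$ requires a delicate nearby-cycle computation tracking the degeneration of $Q^{*}\mathcal{L}_\psi$ along the Cayley singular locus, while bookkeeping the Gauss-sum renormalization so it exactly reproduces the $\epsilon$-factors appearing in Howe's degenerate formula. A cleaner alternative is a uniqueness argument: pure simple perverse sheaves on $G\times V$ of weight $2n$ that agree on the dense open $U\times V$ are forced to be isomorphic, so any sheaf with the right Heisenberg--Weil equivariance and correct generic behavior must coincide with $\mathcal{F}$, and the character values on the complement then follow from the known characters of the reduced Weil representations of smaller symplectic subgroups.
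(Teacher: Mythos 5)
The paper does not prove this statement; it is quoted directly from Gurevich--Hadani (the reference \cite{GH}), and the passage in the paper simply records their main theorem without proof. Your sketch is a reasonable reconstruction of the overall strategy in that reference: the use of the Cayley transform on the open locus where $g-1$ is invertible, the geometrization of the kernel by tensoring an Artin--Schreier pullback with a Kummer-type factor and Tate twist, and the passage to the intermediate extension to obtain a simple perverse sheaf pure of the expected weight. Up through your third step, this matches the Gurevich--Hadani construction in spirit.

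The genuine gap is in your fourth step, and while you correctly flag it as the main obstacle, neither of your two proposals actually closes it. The uniqueness argument establishes only that $\mathcal{F} = j_{!*}(\mathcal{F}_U[\dim G])[-\dim G]$ is the unique simple perverse extension of $\mathcal{F}_U$; that uniqueness does not determine the stalk cohomology of the intermediate extension at points of $G\setminus U$, and hence does not determine $f^{\mathcal{F},(\nu)}$ there. To run the uniqueness argument to a conclusion you would need, in advance, a perverse sheaf already known to have trace function $\omega_\psi^{(\nu)}$ everywhere and to agree generically with $\mathcal{F}_U$ --- but producing such an object is the original problem. Your first proposal (a direct nearby-cycles computation of the boundary stalks and a matching against Howe's degenerate kernel formula) would in principle work, but you leave it as a ``delicate'' computation without carrying it out, so it cannot be accepted as a proof. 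What Gurevich--Hadani actually do at this stage is different: they exploit the homomorphism property of the Weil representation, encoded as a convolution identity on the kernel ($K_{g_1g_2}$ equals the convolution of $K_{g_1}$ and $K_{g_2}$), and they prove that the intermediate extension automatically inherits the corresponding sheaf-theoretic convolution identity, using weight and decomposition-theorem arguments to propagate the identity from the dense open locus to the whole of $G$. Combined with the known generic values, this multiplicativity forces the trace function to equal $\omega_\psi^{(\nu)}$ on all of $G^{F^\nu}$. This convolution/multiplicativity step is the decisive ingredient missing from your sketch.
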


A consequence of Theorem 2.3 is that certain sums of character values
involving $\omega_{\psi}^{\left(\nu\right)}$ are functions of geometric
type by the Grothendieck-Lefchetz trace formula. For later uses, we
now recall the character formula of $\omega_{\psi}$ for semisimple
elements. Let $T$ be an $F$-stable maximal torus of $G$. Then 
\[
T^{F}\cong\underset{j}{\prod}\left(k_j^\times\right)^{\lambda_{j}}\times\left(k_{2j}^1\right)^{\lambda_{j}^{\prime}},
\]
where $\underset{j}{\sum}j\left(\lambda_{j}+\lambda_{j}^{\prime}\right)=n$. Here $k_j$ is the degree-$j$ extension of $k$ and $k_{2j}^1$ is the subgroup of norm-1 elements of $k_{2j}^\times$ (with respect to $k_j$).
We let $\left|\lambda\right|=\underset{j}{\sum}j\lambda_{j}$ and
$\left|\lambda^{\prime}\right|=\underset{j}{\sum}j\lambda_{j}^{\prime}$.
Let $\vartheta_{j}$ and $\vartheta_{j}^{\prime}$ be the unique quadratic
characters of $k_j^{\times}$ and $k_{2j}^{1}$. Let $\vartheta_{T}$ be the quadratic character of $T^{F}$ given by the product of the $\vartheta_{j}$'s and $\vartheta_{j}^{\prime}$'s under the above isomorphism. For $s\in T^{F}$, we write 
\[
s=\left(s_{j1},\ldots,s_{j\lambda_{j}};s_{j1}^{\prime},\ldots,s^\prime_{j\lambda_{j}^{\prime}}\right).
\]
By \cite[Corollary 4.8.1]{Ger}, it follows that 
\[
\omega_{\psi}\left(s\right)=\left(-1\right)^{l\left(T^{F},s\right)}\vartheta_{T}\left(s\right)q^{\frac{1}{2}\dim V^{s}},
\]
where $V^{s}=\ker\left(s-\text{id}_{V}\right)$ and $l\left(T^{F},s\right)=\left|\left\{ \left(j,k\right):1\leq k\leq\lambda_{j}^{\prime},\ s_{jk}^{\prime}\neq1\right\} \right|$.
\begin{rem}\label{rem2.4}
For $G=\text{U}_{n}$, then $\lambda_{j}=0$ for $j$ odd, and $\lambda_{j}^{\prime}=0$
for $j$ even. In this case, we have 
\[
\omega_{\psi}\left(s\right)=\left(-1\right)^{n}\vartheta_{T}\left(s\right)\left(-q\right)^{\frac{1}{2}\dim V^{s}},\ \ \ s\in T^{F}.
\]
\end{rem}

\section{A multiplicity formula for the twisted Gan-Gross-Prasad problem over finite fields}\label{sec3}

Assume $H$ is either $\text{U}_{n}$ or $\text{Sp}_{2n}$ defined
over $k$. Let $q=|k|$. We denote by $k^\prime$ be the quadratic extension of $k$. Let $G=\text{Res}_{k^\prime/k}\left(H_{k^\prime}\right)$,
where $H_{k^\prime}$ is the base change of $H$ to $k^\prime$
and $\text{Res}_{k^\prime/k}$ is the Weil
restriction of coefficients. Let $\psi$ be a nontrivial additive
character of $k$ and $\omega_{\psi}$ be the Weil representation
of $H^{F}$. We want to determine $\dim\text{Hom}_{H^{F}}\left(\pi,\omega_{\psi}\right)$, where $\pi$ is an irreducible representation of $G^{F}$. We follow
the approach in \cite{LMS}. We derive a formula for the multiplicity
\[
M\left(1\right)=\left\langle R_{T,\chi}^{G},\omega_{\psi}\right\rangle _{H^{F}}=\left\langle R_{T,\chi}^{G},\text{Ind}^{G^F}_{H^F}(\omega_{\psi})\right\rangle _{G^{F}},
\]
associated to a Deligne-Lusztig character $R_{T,\chi}^{G}$, where
$T\in\mathcal{T}\left(G\right)$ and $\chi$ is a character of $T^{F}$.

We want to show the integer valued function
\[
M\left(\nu\right)=\left\langle R_{T,\chi^{\left(\nu\right)}}^{G},\omega_{\psi}^{\left(\nu\right)}\right\rangle _{H^{F^{\nu}}}=\left\langle R_{T,\chi^{\left(\nu\right)}}^{G},\text{Ind}^{G^{F^\nu}}_{H^{F^\nu}}(\omega_{\psi}^{\left(\nu\right)})\right\rangle _{G^{F^{\nu}}}
\]
is of geometric type and has a finite limit as $\nu\rightarrow\infty$
along some arithmetic progression $\mathcal{P}_{m}=\left\{ 1+mz:\,z\in\mathbb{Z}^{+}\right\} $
starting from $\nu=1$, where $\chi^{\left(\nu\right)}=\chi\circ N_{\nu}^{T}$
is a character of $T^{F^{\nu}}$. This gives us $M$ is a constant
on $\mathcal{P}_{m}$ and thus 
\[
M\left(1\right)=\underset{\nu\rightarrow\infty,\nu\in\mathcal{P}_{m}}{\lim}M\left(\nu\right).
\]
We then give an explicit formula for regular Deligne-Lusztig characters
based on the above result.

\subsection{Multiplicity as a function of geometric type.}

We prove the first step in our approach.
\begin{prop}
There exists $m\in\mathbb{Z}^{+}$ such that $M\left(\nu\right)$
defines a function of geometric type on $\mathcal{P}_{m}$.
\end{prop}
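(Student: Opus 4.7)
The plan is to express $M(\nu)$ as a finite sum of terms each visibly of geometric type in $\nu$, by applying the refinement of \secref{sec2.2}, expanding $R^{G}_{T,\chi^{(\nu)}}$ via the Deligne-Lusztig reduction formula, and invoking the geometrization of $\omega_{\psi}^{(\nu)}$ from \secref{sec2.4}. One then chooses $m\in\mathbb{Z}^{+}$ so that all discrete combinatorial data entering the sum is $F^{m}$-stable, making the description uniform in $\nu\in\mathcal{P}_{m}$, after which the closure of geometric-type functions under finite sums, products, and ratios completes the proof.

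Starting from
\[
M(\nu)=\frac{1}{|H^{F^{\nu}}|}\sum_{h\in H^{F^{\nu}}}R^{G}_{T,\chi^{(\nu)}}(h)\,\overline{\omega_{\psi}^{(\nu)}(h)},
\]
I would first observe that the integrand is supported on elements whose semisimple part is $G$-conjugate to an element of $T$. Applying the refinement of \secref{sec2.2} (whose $\mathcal{J}(T,S)$-combinatorics is exactly tailored to this $(G,H)$-restriction situation) rewrites $M(\nu)$ as a finite sum indexed by $\jmath\in 2^{\mathcal{J}(T)}$, $\iota\in I(T)$, and a unipotent class $[u]\in\mathcal{U}(G_{\iota}^{F^{\nu}})$, with an inner sum over $s\in T_{\jmath,\iota}^{F^{\nu}}$. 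The Deligne-Lusztig reduction formula then converts each $R^{G}_{T,\chi^{(\nu)}}(su)$ into a weighted combination of Green function values $Q^{G_{\iota}}_{T_{v}}(u)$ and character values $^{\gamma}\chi^{(\nu)}(s)$. Choosing $m$ so that $F^{m}$ fixes every element of $\mathcal{J}(T)$, $I(T)$, $W_{G}(T)$, $W_{G_{\iota}}$ for each $\iota$, and every unipotent orbit in each $G_{\iota}$, makes the outer index sets stabilize along $\mathcal{P}_{m}$; the relevant group orders $|H^{F^{\nu}}|$, $|\bar{N}_{G}(\iota,T)^{F^{\nu}}|$, $|C_{G_{\iota}}(u)^{F^{\nu}}|$ and each Green function value $Q^{G_{\iota}}_{T_{v}}(u)$ are then polynomial in $q^{\nu}$ by standard Deligne-Lusztig theory, hence of geometric type.

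What remains is to prove that each inner sum of the form
\[
\Sigma(\nu):=\sum_{s\in T_{\jmath,\iota}^{F^{\nu}}}{}^{\gamma}\chi^{(\nu)}(s)\,\overline{\omega_{\psi}^{(\nu)}(su)}
\]
is of geometric type in $\nu\in\mathcal{P}_{m}$. For this I would invoke Grothendieck's sheaf-function correspondence: a Kummer-type $\ell$-adic character sheaf $\mathcal{L}_{^{\gamma}\chi}$ on $T$ realizes $^{\gamma}\chi^{(\nu)}$ via $\chi^{(\nu)}=\chi\circ N_{\nu}^{T}$, and the geometrization theorem of \secref{sec2.4} realizes $\omega_{\psi}^{(\nu)}$ as the Frobenius trace on the sheaf $\mathcal{F}$. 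Tensoring $\mathcal{L}_{^{\gamma}\chi}$ with the pullback of the Verdier dual of $\mathcal{F}$ along $s\mapsto su$, and restricting to the locally closed subvariety $T_{\jmath,\iota}\subset T$, yields a complex in $D^{b}(T_{\jmath,\iota},\bar{\mathbb{Q}}_{\ell})$ whose Frobenius trace sum is $\Sigma(\nu)$; by the Grothendieck-Lefschetz trace formula, $\Sigma(\nu)$ is then of geometric type. Combining a fixed finite number of such terms (independent of $\nu\in\mathcal{P}_{m}$), each multiplied by a polynomial in $q^{\nu}$, gives the required geometric-type function for $M(\nu)$. The main obstacle I expect is the precise bookkeeping for the refinement in the $(G,H)$-restriction setting, since $\omega_{\psi}$ lives on $H^{F}$ while $R^{G}_{T,\chi}$ is defined on $G^{F}$, forcing the $G$-conjugacy data into both $T$ and $H$ to be tracked simultaneously via $\mathcal{J}(T,S)$; once this bookkeeping is set up correctly, the sheaf-theoretic step is a $\nu$-uniform adaptation of the argument in \cite{LMS}.
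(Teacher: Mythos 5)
Your overall strategy matches the paper's: rewrite $M(\nu)$ via the Section~\ref{sec2.2} refinement into sums indexed by $(\jmath,\iota,[u])$ with inner sums over $T_{\jmath,\iota}^{F^\nu}$, stabilize the combinatorics by enlarging $m$, observe the Green functions and group orders are polynomial in $q^\nu$, and geometrize the remaining inner sum by Grothendieck--Lefschetz. But there is a genuine gap exactly at the point you flag as a mere ``obstacle'': the $(G,H)$-restriction bookkeeping, which is the real content of this step and is not correctly handled by your inner sum.

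Concretely, you write the inner sum as
\[
\Sigma(\nu)=\sum_{s\in T_{\jmath,\iota}^{F^\nu}}{}^{\gamma}\chi^{(\nu)}(s)\,\overline{\omega_{\psi}^{(\nu)}(su)},
\]
but $s\in T_{\jmath,\iota}\subset T\subset G$ and $u$ is unipotent in $G_\iota\subset G$, so $su$ need not lie in $H^{F^\nu}$ and $\omega_{\psi}^{(\nu)}(su)$ is not defined. The paper first replaces $\langle R^G_{T,\chi^{(\nu)}},\omega_\psi^{(\nu)}\rangle_{H^{F^\nu}}$ by $\langle R^G_{T,\chi^{(\nu)}},\mathrm{Ind}^{G^{F^\nu}}_{H^{F^\nu}}\omega_\psi^{(\nu)}\rangle_{G^{F^\nu}}$ via Frobenius reciprocity, and then the inner sum acquires the crucial extra sum over $x\in T(\jmath,\iota,u)=\{x\in G(k):\,x^{-1}sux\in H\}$, namely
\[
\sum_{s\in T_{\jmath,\iota}^{F^\nu}}\chi\circ N^\nu(\gamma^{-1}s\gamma)\cdot\sum_{x\in T(\jmath,\iota,u)}\frac{\omega_\psi^{(\nu)}(x^{-1}sux)}{|H^{F^\nu}|}.
\]
Correspondingly, the geometrization is not a direct pullback of the Weil sheaf $\mathcal{F}$ along $s\mapsto su$ (that pullback would have to live over $T_{\jmath,\iota}\subset G$, where $\mathcal{F}$ is not defined). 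The paper instead forms $i_!\mathcal{F}$ where $i:\frac{H}{H}\to\frac{G}{G}$ is the map of adjoint quotients, so that $i_!\mathcal{F}$ is the sheaf realizing $\mathrm{Ind}^G_H(\omega_\psi)$, and then restricts $u^*i_!\mathcal{F}\otimes\mathcal{L}^\vee_{^{\gamma}\chi}$ to $T_{\jmath,\iota}$. Without the Frobenius reciprocity step, the $T(\jmath,\iota,u)$-sum, and the pushforward $i_!$, the argument does not close; these are exactly where the $(G,H)$-restriction enters and cannot be deferred.
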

\begin{proof}
Let 
\[
f^{\left(\nu\right)}:G^{F^{\nu}}\rightarrow\mathbb{C},\ \ f^{\left(\nu\right)}\left(g\right)=\overline{R_{T,\chi^{\left(\nu\right)}}^{G}\left(g\right)}\cdot\left[\text{Ind}^{G^{F^\nu}}_{H^{F^\nu}}(\omega_{\psi})\right]\left(g\right).
\]
It follows that 
\[
M\left(\nu\right)=\frac{1}{\left|G^{F^{\nu}}\right|}\underset{g\in G^{F^{\nu}}}{\sum}f^{\left(\nu\right)}\left(g\right).
\]
For $g=su\in G^F$ such that $s\in T_{\jmath,\iota}$ (here we use the Jordan decomposition), we have  
$$
T(g,H)=T(s,H)\cap T(u,H)=T(\jmath,\iota)\cap T(u,H)=:T(\jmath,\iota,u).
$$
By section \ref{sec2.2}, it follows that
$$
M(\nu)=\sum_{\stackrel{\jmath\in 2^{\mathcal{J}(T),F^\nu}}{\iota \in I(T)^{F^\nu}}}\sum_{\bar{\gamma}\in\bar{N}(\iota,T)^{F^\nu}}\sum_{[u]\in\mathcal{U}(G_\iota^{F^\nu})} \frac{Q^{G_\iota,\nu}_{\gamma T \gamma^{-1}}(u)}{|\bar{N}(\iota,T)^{F^\nu}|\cdot |C_{G_\iota}(u)^{F^\nu}|}$$
$$
\left[\sum_{s\in T_{\jmath,\iota}^{F^\nu}}\chi\circ N^\nu(\gamma^{-1}s\gamma)\cdot\left(\sum_{x\in T(\jmath,\iota,u)}\frac{\omega_\psi(x^{-1}sux)}{|H^{F^\nu}|}\right)\right].
$$
We can choose $m$ such that $I(T)^{F^\nu}=I(T)^F$, $\mathcal{J}(T)^{F^\nu}=\mathcal{J}(T)^{F}$, $\bar{N}(\iota,T)^{F^\nu}=\bar{N}(\iota,T)^{F}$ and $\mathcal{U}(G_\iota^{F^\nu})=\mathcal{U}(G_\iota^{F})$, for any $\nu \in \mathcal{P}_m$. In this case 
$$
M(\nu)=\sum_{\stackrel{\jmath\in 2^{\mathcal{J}(T),F}}{\iota \in I(T)^{F}}}\sum_{\bar{\gamma}\in\bar{N}(\iota,T)^{F}}\sum_{[u]\in\mathcal{U}(G_\iota^{F})} \frac{Q^{G_\iota,\nu}_{\gamma T \gamma^{-1}}(u)}{|\bar{N}(\iota,T)^{F}|\cdot |C_{G_\iota}(u)^{F^\nu}|}$$
$$
\left[\sum_{s\in T_{\jmath,\iota}^{F^\nu}}\chi\circ N^\nu(\gamma^{-1}s\gamma)\cdot\left(\sum_{x\in T(\jmath,\iota,u)}\frac{\omega_\psi(x^{-1}sux)}{|H^{F^\nu}|}\right)\right].
$$
Since $Q^{G_\iota,\nu}_{\gamma T \gamma^{-1}}(u)$ and $|C_{G_\iota}(u)^{F^\nu}|$ are polynomials (with respect to $\nu$), it suffices to show that 
$$
\sum_{s\in T_{\jmath,\iota}^{F^\nu}}\chi\circ N^\nu(\gamma^{-1}s\gamma)\cdot\left(\sum_{x\in T(\jmath,\iota,u)}\frac{\omega_\psi(x^{-1}sux)}{|H^{F^\nu}|}\right)
$$
is of geometric type. As in Theorem \ref{thm2.3}, we take $\mathcal{F}\in D^b(H\times V)$ to be the sheaf corresponding to $\omega_\psi$. Let $
i:\frac{H}{H}\rightarrow \frac{G}{G}$ be the natural morphism mapping conjugacy classes of $H$ to $G$. Then $i_!\mathcal{F}$ is the corresponding sheaf of $\text{Ind}^G_H(\omega_\psi)$. We denote by $\mathcal{L}_{^\gamma\chi}$ the rank one sheaf on $T_{\jmath,\iota}$ corresponding to the character $^\gamma\chi$. We have 
$$
\sum_{s\in T_{\jmath,\iota}^{F^\nu}}\chi\circ N^\nu(\gamma^{-1}s\gamma)\cdot\left(\sum_{x\in T(\jmath,\iota,u)}\frac{\omega_\psi(x^{-1}sux)}{|H^{F^\nu}|}\right)=\text{Tr}(F^\nu,R\Gamma_c(T_{\jmath,\iota},u^*i_!\mathcal{F}\otimes\mathcal{L}^\vee_{^\gamma\chi})),
$$
which is to say it is of geometric type. Therefore, the function $M(\nu)$ is of geometric type for $\nu\in\mathcal{P}_m$.
\end{proof}
Now there remains to show $M\left(\nu\right)$ converges when $\nu\overset{\mathcal{P}_{m}}{\longrightarrow}\infty$
and compute the limit explicitly.

\subsection{A partition of $T_{\jmath,\iota}$ via its action on the symplectic space $V$.}

We mimic the set up in \cite{LMS}. Since we are considering $H$
is either $\text{U}_{n}$ or $\text{Sp}_{2n}$, it follows that $H$
acts on the $2n$-dimensional symplectic space $V$. Define $\Gamma=\text{Gal}\left(\bar{k}/k\right)\times\left\langle \imath\right\rangle$, where $\imath$ is the involution $x\mapsto x^{-1}:\bar{k}\rightarrow\bar{k}$.
We have a group action of $\Gamma$ on $\bar{k}^{\times}$.
Let $\Lambda_{s}\subset\bar{k}^{\times}$ be the set
of eigenvalues of $s$ acting on $V$. Then $\Gamma$ acts on $\Lambda_{s}$,
and we denote the $\Gamma$-orbit of $a\in\Lambda_{s}$ by $\left[a\right]$. Observe 
\[
H_{s}=\underset{\left[a\right]\in\Lambda_{s}/\Gamma}{\prod}H_{s,\left[a\right]},
\]
where 
\begin{itemize}
\item $H_{s,\left[1\right]}$ and $H_{s,\left[-1\right]}$ are of the same
type as $H$;
\item $H_{s,\left[a\right]}$, with $a\neq\pm1$, are general linear groups
or unitary groups.
\end{itemize}
Let 
\[
H_{s}^{\prime}=\underset{\left[a\right]\neq\left[1\right]}{\prod}H_{s,\left[a\right]}.
\]
We have $H_{s}=H_{s}^{\prime}\times H_{s,\left[1\right]}$. We fix an $F$-stable maximal torus $S$ of $H$. Let $K_H(S)=\{V^s:\ s\in S\}$ be an index set of symplectic subspaces of $V$. For any $\kappa\in K_H(S)$, we denote by $S_\kappa=\{s\in S:\ V^s=V^\kappa\}$ and $H_\kappa$ the centralizer of $S_\kappa$. We have
$$
H_\kappa=Z_\kappa\times H_{\kappa,[1]},
$$
where $Z_\kappa=\overline{S_\kappa}$ can be viewed as a maximal torus in $\text{Sp}(V/V^\kappa)$ and $H_{\kappa,[1]}=H_{s,[1]}$, for any $s\in S_\kappa$. For each $(\jmath,\iota)\in 2^{\mathcal{J}(T)}\times I(T)$, let $x\in G(k)$ such that $x^{-1}T_{\jmath,\iota}x\subset S$ and denote by $K_{H}\left(T_{\jmath,\iota}\right)$
be the index set for $\left\{ V^{x^{-1}sx}:\,s\in T_{\jmath,\iota}\right\}$. Then
$\left|K_{H}\left(T_{\jmath,\iota}\right)\right|\leq 2^{n}$. The Frobenius $F$ acts
naturally on $K_{H}\left(T_{\jmath,\iota}\right)$ and 
\[
K_{H}\left(T_{\jmath,\iota}\right)^{F}=\left\{ V^{s}:\,s\in T_{\jmath,\iota}^{F}\right\} .
\]
For $\jmath\in K_{H}\left(T\right)$, we denote by $V^{\kappa}$ the
corresponding symplectic subspace of $V$ and 
\[
T_{\jmath,\iota,\kappa}=\left\{ s\in T_{\jmath,\iota}:\,V^{x^{-1}sx}=V^{\kappa}\right\}.
\]
Then 
\[
T_{\jmath,\iota}=\underset{\kappa\in K_{H}\left(T_{\jmath,\iota}\right)}{\bigsqcup}T_{\jmath,\iota,\kappa}.
\]
We increase the divisibility of $m$ so that $F^{m}$ acts trivially
on $K_{H}\left(T_{\jmath,\iota}\right)$, for any $(\jmath,\iota)\in 2^{\mathcal{J}(T),F}\times I(T)^F$. If so, then for any $\nu\in\mathcal{P}_{m}$, we have
\[
T_{\jmath,\iota}^{F^{\nu}}=\underset{\kappa\in K_{H}\left(T_{\jmath,\iota}\right)^{F}}{\bigsqcup}T_{\jmath,\iota,\kappa}^{F^{\nu}}.
\]
For any $\kappa\in K_{H}\left(T_{\jmath,\iota}\right)^{F}$, we have the following decomposition
\[
G_{\iota}=G_{\iota,\kappa}^{\prime}\times G_{\kappa,\left[1\right]},
\]
where $G_{\iota,\kappa}^{\prime}=x^{-1}(\text{Res}_{k^\prime/k}\,H_{x^{-1}sx,k^\prime}^{\prime})x$ and $G_{\kappa,\left[1\right]}=x^{-1}(\text{Res}_{k^\prime/k}\,H_{x^{-1}sx,[1],k^\prime})x$ for any $s\in T_{\jmath,\iota,\kappa}$, noting that this does not depend on $\jmath\in 2^{\mathcal{J}(T),F}$. We denote $Z_{\iota,\kappa}=Z(G^\prime_{\iota,\kappa})$.

\subsection{Multiplicity formula.}

Recall that we have
$$
M(\nu)=\sum_{\stackrel{\jmath\in 2^{\mathcal{J}(T),F}}{\iota \in I(T)^{F}}}\sum_{\bar{\gamma}\in\bar{N}(\iota,T)^{F}}\sum_{[u]\in\mathcal{U}(G_\iota^{F})}\sum_{\kappa\in K_H(T_{\jmath,\iota})^F} \frac{Q^{G_\iota,\nu}_{\gamma T \gamma^{-1}}(u)}{|\bar{N}(\iota,T)^{F}|\cdot |C_{G_\iota}(u)^{F^\nu}|}$$
$$
\left[\sum_{s\in T_{\jmath,\iota,\kappa}^{F^\nu}}\chi\circ N^\nu(\gamma^{-1}s\gamma)\cdot\left(\sum_{x\in T(\jmath,\iota,u)}\frac{\omega_\psi(x^{-1}sux)}{|H^{F^\nu}|}\right)\right].
$$
Observe
$$
\deg(Q^{G_\iota,\nu}_{\gamma T \gamma^{-1}}(u))= \frac{1}{2}(\dim C_{G_\iota}(u)-\overline{\text{rk}}\,G_\iota)\text{ and }\dim T(\jmath,\iota,u)\leq \frac{1}{2}\dim C_{G_\iota}(u)+\dim H. 
$$
By Remark \ref{rem2.4}, we have 
\[
\left|\omega_{\psi}^{\left(\nu\right)}\left(x^{-1}sux\right)\right|=q^{\frac{\nu}{2}\dim V^{x^{-1}sux}}\leq q^{\frac{\nu}{2}\dim V^{x^{-1}sx}}=q^{\frac{\nu}{2}\overline{\text{rk}}G_{\kappa,\left[1\right]}}.
\]
This gives us 
$$
\left|\sum_{s\in T_{\jmath,\iota,\kappa}^{F^\nu}}\chi\circ N^\nu(\gamma^{-1}s\gamma)\cdot\left(\sum_{x\in T(\jmath,\iota,u)}\frac{\omega_\psi(x^{-1}sux)}{|H^{F^\nu}|}\right)\right|
\leq \left|T_{\jmath,\iota,\kappa}^{F^{\nu}}\right|\cdot q^{\frac{\nu}{2}\overline{\text{rk}}G_{\kappa,\left[1\right]}}
$$
$$
\ll_{v,\mathcal{P}_{m}} q^{\frac{\nu}{2} \left(\overline{\text{rk}}Z_{\iota,\kappa}+\overline{\text{rk}}G_{\kappa,\left[1\right]}\right)}
\leq q^{\frac{\nu}{2} \left(\overline{\text{rk}}G_{\iota,\kappa}+\overline{\text{rk}}G_{\kappa,\left[1\right]}\right)}= q^{\frac{\nu}{2}\overline{\text{rk}}G_{\iota}}=q^{\nu n}.
$$
Therefore 
\[
\frac{Q^{G_\iota,\nu}_{\gamma T \gamma^{-1}}(u)}{|\bar{N}(\iota,T)^{F}|\cdot |C_{G_\iota}(u)^{F^\nu}|}\left[\sum_{s\in T_{\jmath,\iota,\kappa}^{F^\nu}}\chi\circ N^\nu(\gamma^{-1}s\gamma)\cdot\left(\sum_{x\in T(\jmath,\iota,u)}\frac{\omega_\psi(x^{-1}sux)}{|H^{F^\nu}|}\right)\right]\rightarrow 0
\]
when $\nu\overset{\mathcal{P}_{m}}{\longrightarrow}\infty$ unless 
\[
u=1\ \ \text{ and }\ \ \overline{\text{rk}}\left(T_{\jmath,\iota,\kappa}\right)=\frac{1}{2}\overline{\text{rk}}Z_{\iota,\kappa}=\frac{1}{2}\overline{\text{rk}}\ G_{\iota,\kappa}^{\prime}.
\]
The second equality is equivalent to $G^\prime_{\iota,\kappa}=Z_{\iota,\kappa}=\overline{T_\iota}=\text{Res}_{k^\prime/k}(\overline{T_{\jmath,\iota,\kappa}})_{k^\prime}$. In this case, it follows that $T_{\jmath,\iota,\kappa}$ is $G(k)$-conjugate to $Z_{\kappa,reg}$. We denote by $\chi_{\jmath,\kappa}$ as a character of $Z_{\kappa,reg}$ given by conjugating $\chi\mid_{T_{\jmath,\iota,\kappa}}$ with an element $x\in G(k)$ such that $x^{-1}T_{\jmath,\iota,\kappa}x=Z_{\kappa,reg}$. By the character formula stated in Section \ref{sec2.4}, we have 
\[
\omega_{\psi}^{\left(\nu\right)}\left(s\right)=\left(-1\right)^{l\left(Z_{\kappa}^{F^{\nu}},s\right)}\vartheta_{Z_{\kappa}}^{\left(\nu\right)}\left(s\right)q^{\frac{\nu}{2}\dim V^{\kappa}},
\]
for any $s\in Z_{\kappa}^{\text{reg},F^{\nu}}$. We can choose $m\in\mathbb{Z}^{+}$ such that $l\left(Z_{\kappa}^{F^{\nu}},s\right)=l\left(Z_{\kappa}^{F},s\right)$
for all $\nu\in\mathcal{P}_{m}$. We can see that $l\left(Z_{\kappa}^{F},s\right)$
is the number of anisotropic factors of $Z_{\kappa}$, hence does not depend on choice of $s\in Z_{\kappa}^{\text{reg},F}$. We denote this number by $l\left(Z_{\kappa}\right)$. Since regular elements are Zariski open dense, in the case considered above, when $\nu \overset{\mathcal{P}_m}{\longrightarrow}\infty$, we have 
$$
\sum_{s\in T_{\jmath,\iota,\kappa}^{F^\nu}}\chi^{(\nu)}(\gamma^{-1}s\gamma)\cdot\left(\sum_{x\in T(\jmath,\iota)}\frac{\omega_\psi(x^{-1}sx)}{|H^{F^\nu}|}\right) 
$$
$$
\approx_{\nu,\mathcal{P}_m} 
(-1)^{l(Z_\kappa)}q^{\nu\cdot\overline{rk}\,H_{\kappa,[1]}}\frac{|Z_\kappa^{F^\nu}|\cdot|T(\jmath,\iota)^{F^\nu}|\cdot \langle ^\gamma\chi_\kappa^{(\nu)},\vartheta_{Z_\kappa}^{(\nu)}\rangle_{Z_\kappa^{F^\nu}}}{|H^{F^\nu}|}
$$
$$
\approx_{\nu,\mathcal{P}_m} (-1)^{l(Z_\kappa)}q^{\nu (n+\frac{1}{2}\dim{G_\iota})}\mathcal{C}(T(\jmath,\iota))\langle ^\gamma\chi_\kappa,\vartheta_{Z_\kappa}\rangle_{Z_\kappa^{F}},
$$
where $\mathcal{C}(T(\jmath,\iota))$ is the number of connected components of $\mathcal{C}(T(\jmath,\iota))$. Moreover, since we are considering the case when $G=\text{Res}_{k^\prime,k}H_{k^\prime}$, we always have $\mathcal{C}(T(\jmath,\iota))=1$. We remark that although the definition of $\chi_\kappa$ depends on choices of $x\in G(k)$ such that $x^{-1}T_{\jmath,\iota,\kappa}x=Z_{\kappa,reg}$, the above formula is independent of these choices. Observe
$$
\frac{Q^{G_\iota,\nu}_{\gamma T \gamma^{-1}}(1)}{|\bar{N}(\iota,T)^{F}|\cdot |G_\iota^{F^\nu}|}\left[\sum_{s\in T_{\jmath,\iota,\kappa}^{F^\nu}}\chi\circ N^\nu(\gamma^{-1}s\gamma)\cdot\left(\sum_{x\in T(\jmath,\iota)}\frac{\omega_\psi(x^{-1}sx)}{|H^{F^\nu}|}\right)\right]
$$
converges to 
$$
\frac{(-1)^{\text{rk}_{k^\prime}(T)+\text{rk}_{k^\prime}(G)+l(Z_\kappa)}}
{|\bar{N}(\iota,T)^F|}\cdot 
\langle ^\gamma\chi_\kappa,\vartheta_{Z_\kappa}\rangle_{Z_\kappa^{F}}.
$$
From the above discussions, we can see that $\underset{\nu\overset{\mathcal{P}_m}{\longrightarrow}\infty}{\lim}M(\nu)$ is finite, which is to say
\[
M\left(1\right)=\underset{\nu\overset{\mathcal{P}_m}{\longrightarrow}\infty}{\lim}M(\nu).
\]
We want to rearrange nonzero terms in $\underset{\nu\overset{\mathcal{P}_m}{\longrightarrow}\infty}{\lim}M(\nu)$. We fix an $F$-stable maximal torus of $S$. Let $\bar{K}_H(S)=\{V^s:\ s\in S/G(k)-\text{conj}\}$. For each $\kappa \in \bar{K}_H(S)$, let $H_\kappa$ and $G_\kappa$ be the centralizer of $S_\kappa$ in $H$ and $G$. Recall that $H_\kappa=Z_\kappa \times H_{\kappa,[1]}$. For $(\jmath,\iota,\kappa)$ whose corresponding limit is nonzero, it follows that $\overline{T_\jmath}=\overline{T_{\jmath,\iota,\kappa}}$ is $G(k)$-conjugate to $Z_\kappa$ and $G_\iota$ is $G(k)$-conjugate to $G_\kappa$. For each $\kappa \in \bar{K}_H(S)$, we set $\mathcal{J}(T,Z_\kappa)$ to be the set of subtori of $T$ that are $G(k)$-conjugate to $Z_\kappa$. For each $\jmath\in \mathcal{J}(T,Z_\kappa)$, we denote $\bar{N}_G(\jmath,T)=\bar{N}_G(s,T)$ for some $s\in T_\jmath$ such that $G_s$ is $G(k)$-conjugate to $G_\kappa$. From the above discussion, we have  
$$
M(1)=\sum_{\kappa\in \bar{K}_H(S)^F}\sum_{\jmath\in \mathcal{J}(T,Z_\kappa)^F}\sum_{\gamma\in \bar{N}(\jmath,T)^F}
\frac{(-1)^{\text{rk}_{k^\prime}(T)+\text{rk}_{k^\prime}(G)+l(Z_\kappa)}}
{|\bar{N}(\jmath,T)^F|}\cdot \langle ^\gamma\chi_\jmath,\vartheta_{T_\jmath}\rangle_{T_\jmath^{F}},
$$
here we denote by $T(\jmath,\iota)$ the set $\{g\in G:\,g^{-1}sg\in H\}$, for an element $s\in T_{\jmath}$ such that $G_s$ is $G(k)$-conjugate to $G_\kappa$. We are now able to deduce the multiplicity formula.
\begin{thm}
We fix an $F$-stable torus $S$ of $H$. We have the following multiplicity 
\[
\left\langle R_{T,\chi}^{G},\omega_{\psi}\right\rangle _{H^{F}}=
\sum_{\kappa\in \bar{K}_H(S)^F}\sum_{\jmath\in \mathcal{J}(T,Z_\kappa)^F}\sum_{\gamma\in \bar{N}(\jmath,T)^F}
\frac{(-1)^{\text{rk}_{k^\prime}(T)+\text{rk}_{k^\prime}(G)+l(Z_\kappa)}}
{|\bar{N}(\jmath,T)^F|}\cdot \langle ^\gamma\chi_\jmath,\vartheta_{T_\jmath}\rangle_{T_\jmath^{F}}
\]
\[
=\sum_{\kappa\in \bar{K}_H(S)^F}\sum_{\jmath\in \mathcal{J}(T,Z_\kappa)^F}
\frac{(-1)^{\text{rk}_{k^\prime}(T)+\text{rk}_{k^\prime}(G)+l(Z_\kappa)}}
{|W_G(T)^F|}\sum_{\gamma \in W_G(T)^F}\langle ^\gamma\chi_\jmath,\vartheta_{T_\jmath}\rangle_{T_\jmath^{F}}.
\]
\end{thm}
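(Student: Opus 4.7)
The plan is to specialize at $\nu=1$ the one-parameter family
$M(\nu) = \langle R^G_{T,\chi^{(\nu)}}, \omega^{(\nu)}_\psi\rangle_{H^{F^\nu}}$ and extract the stated formula as the asymptotic value of $M$ along a suitable arithmetic progression $\mathcal{P}_m$. The key preparatory step is Proposition~3.1: after choosing $m$ so that the discrete combinatorial data ($I(T)$, $\mathcal{J}(T)$, the unipotent orbits of $G_\iota$, the indexing sets $K_H(T_{\jmath,\iota})$, and the integers $l(Z_\kappa)$) are all $F$-stable on $\mathcal{P}_m$, the Deligne--Lusztig reduction formula of Section~2.1 combined with the refined summation of Section~2.2 and the Weil sheaf $\mathcal{F}$ of Theorem~2.3 (twisted by a Kummer sheaf $\mathcal{L}^\vee_{{}^\gamma\chi}$) presents each inner summand as the $F^\nu$-trace on a complex $R\Gamma_c\bigl(T_{\jmath,\iota,\kappa}, u^* i_! \mathcal{F} \otimes \mathcal{L}^\vee_{{}^\gamma\chi}\bigr)$, realizing $M(\nu)$ as a function of geometric type.

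Next, I would perform an asymptotic analysis of each term. Using the bound $|\omega^{(\nu)}_\psi(x^{-1}sux)| \le q^{\frac{\nu}{2}\overline{\text{rk}}\,G_{\kappa,[1]}}$ from the character formula of Section~2.4, the crude count $|T_{\jmath,\iota,\kappa}^{F^\nu}| \ll q^{\nu\,\overline{\text{rk}}\,Z_{\iota,\kappa}}$, and the polynomial growth in $\nu$ of $Q^{G_\iota,\nu}_{\gamma T\gamma^{-1}}(u)$, $|C_{G_\iota}(u)^{F^\nu}|$, and $|H^{F^\nu}|$, I would show that every contribution decays as $\nu \in \mathcal{P}_m$ grows except when $u=1$ and $\overline{\text{rk}}(T_{\jmath,\iota,\kappa}) = \tfrac{1}{2}\overline{\text{rk}}\,Z_{\iota,\kappa} = \tfrac{1}{2}\overline{\text{rk}}\,G'_{\iota,\kappa}$. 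The latter equality forces $G'_{\iota,\kappa} = \text{Res}_{k^\prime/k}(\overline{T_{\jmath,\iota,\kappa}})_{k^\prime}$, so that $T_{\jmath,\iota,\kappa}$ is $G(k)$-conjugate to the regular locus $Z_{\kappa,\mathrm{reg}}$.

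On each surviving regular stratum, the character formula of Section~2.4, together with a further enlargement of $m$ so that $l(Z_\kappa^{F^\nu},s) = l(Z_\kappa)$ for all $\nu \in \mathcal{P}_m$, gives the clean identity $\omega^{(\nu)}_\psi(s) = (-1)^{l(Z_\kappa)}\vartheta^{(\nu)}_{Z_\kappa}(s)\, q^{\frac{\nu}{2}\dim V^\kappa}$. The $\nu$-dependent powers of $q$ then cancel against $|H^{F^\nu}|$ and the volume factor $|Z_\kappa^{F^\nu}|\cdot|T(\jmath,\iota)^{F^\nu}|$, leaving in the limit the inner product $\langle {}^\gamma\chi_\kappa,\vartheta_{Z_\kappa}\rangle_{Z_\kappa^F}$ weighted by $(-1)^{\text{rk}_{k^\prime}(T)+\text{rk}_{k^\prime}(G)+l(Z_\kappa)}/|\bar{N}(\iota,T)^F|$ and by the connected-component count $\mathcal{C}(T(\jmath,\iota))$, which equals $1$ under the hypothesis $G=\text{Res}_{k^\prime/k}H_{k^\prime}$. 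Since $M(\nu)$ is integer-valued, of geometric type on $\mathcal{P}_m$, and now has a finite limit, Lemma~2.2 forces $M(1) = \lim_{\nu\to\infty,\,\nu\in\mathcal{P}_m} M(\nu)$.

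Finally, I would rearrange the limit into the stated shape: grouping pairs $(\jmath,\iota)$ by the $G(k)$-conjugacy class of $G_\iota$ (equivalently by $\kappa \in \bar{K}_H(S)^F$) and the $G(k)$-conjugacy class of $\overline{T_\jmath}$ converts the double sum into one indexed by $\kappa$ and $\jmath \in \mathcal{J}(T,Z_\kappa)^F$; the inner sum over $\bar{\gamma}\in \bar{N}(\jmath,T)^F$ is then unfolded via the action of $W_G(T)^F$ on $\bar{N}(\jmath,T)^F$ to yield the averaged second displayed identity. The main obstacle, and the step requiring the most care, is the asymptotic comparison of exponents in the second step: one must verify that no non-regular stratum and no $u \neq 1$ contribution escapes decay, and this is exactly where the identity $\overline{\text{rk}}\,G_\iota = 2n$ forced by $G = \text{Res}_{k^\prime/k}H_{k^\prime}$ is essential, together with the sharp estimate $\dim V^{x^{-1}sux} \le \dim V^{x^{-1}sx}$ extracted from the Weil character formula.
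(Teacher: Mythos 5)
Your proposal reproduces the paper's proof strategy faithfully and essentially step by step: realizing $M(\nu)$ as a function of geometric type via the Grothendieck sheaf--function correspondence applied to $u^*i_!\mathcal{F}\otimes\mathcal{L}^\vee_{{}^\gamma\chi}$ (Proposition~3.1), asymptotically eliminating all contributions except those with $u=1$ and $T_{\jmath,\iota,\kappa}$ conjugate into the regular locus $Z_{\kappa,\mathrm{reg}}$ using the Weil character bound and $\overline{\mathrm{rk}}\,G_\iota = \overline{\mathrm{rk}}\,G = 2n$, invoking the constancy lemma for integer-valued geometric-type functions to conclude $M(1)=\lim_{\nu}M(\nu)$, and finally regrouping $(\jmath,\iota,\kappa)$ by $G(k)$-conjugacy class and unfolding $\bar{N}(\jmath,T)^F$ over $W_G(T)^F$. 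This matches the paper's argument at every stage.
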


\section{Application of the multiplicity formula}\label{sec4}
In this section, for simplicity, we only consider the case when $(G,H)=\left(\text{Res}_{k^\prime/k}\text{GL}_{n},\text{U}_{n}\right)$. The remaining case when $(G,H)=\left(\text{Res}_{k^\prime/k}\text{Sp}_{2n,k^\prime},\text{Sp}_{2n}\right)$ should be treated similarly.

\subsection{A variant of Hiss-Schroer conjecture}

In this subsection, similar to \cite{LMS} and \cite{Shi1}, we consider
a variant of \cite{HS} for the twisted Gan-Gross-Prasad problem.
\begin{problem}
Let $\pi\in\text{Irr}\left(G^{F}\right)$, where $H=\text{U}_{n}$
or $\text{Sp}_{2n}$ defined over $k$ and $G=\text{Res}_{k^\prime/k}H_{k^\prime}$.
Then the multiplicity $\left\langle \pi,\omega_{\psi}\right\rangle _{H^{F}}$
is bounded by a function of $n$ which is independent of $k$.
\end{problem}

We give an answer to the above question when $\left(G,H\right)=\left(\text{Res}_{k^\prime/k}\text{GL}_{n},\text{U}_{n}\right)$.
The remaining case should follow the same strategy to \cite{Shi1}.
\begin{thm}\label{thm4.2}
Assume that $\left(G,H\right)=\left(\text{Res}_{k^\prime/k}\text{GL}_{n},\text{U}_{n}\right)$. Then for any $\pi\in\text{Irr}\left(G^{F}\right)$,
we have 
\[
m\left(\pi\right)=\left\langle \pi,\omega_{\psi}\right\rangle _{H^{F}}\leq 2^{n}(n!)^{3/2}.
\]
\end{thm}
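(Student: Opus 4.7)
The plan is to expand any $\pi\in\text{Irr}(G^F)$ as a rational linear combination of Deligne--Lusztig virtual characters $R^G_{T,\chi}$, bound each $|\langle R^G_{T,\chi},\omega_\psi\rangle_{H^F}|$ using \thmref{thm1.1}, and then combine via the triangle inequality so that only data depending on $n$ survive.

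First I would apply the Lusztig--Srinivasan parametrization of $\text{Irr}(G^F)$ for $G=\text{Res}_{k^\prime/k}\text{GL}_n$: every $\pi$ lies in a rational series $\mathcal{E}(G^F,s)$ for some semisimple $s\in G^{*F}$ and can be written in the form
$$\pi=\frac{\varepsilon_\pi}{|W(s)^F|}\sum_{w\in W(s)^F}\phi(w)\,R^G_{T_w,\theta_s},$$
where $W(s)^F=W_{G^*}(s)^F$ is a product of symmetric groups of total rank at most $n$ (in particular $|W(s)^F|\le n!$), $\phi$ is an irreducible character of $W(s)^F$ (so $|\phi(w)|\le\phi(1)\le|W(s)^F|^{1/2}\le\sqrt{n!}$), $\varepsilon_\pi\in\{\pm 1\}$, $T_w$ is a maximal torus of $G$ in the $F$-conjugacy class labeled by $w$, and $\theta_s$ is the character of $T_w^F$ corresponding to $s$ under duality. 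This reduces the problem to a uniform bound on the individual Deligne--Lusztig pairings.

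Next I would invoke \thmref{thm1.1} to bound each $|\langle R^G_{T_w,\theta_s},\omega_\psi\rangle_{H^F}|$. The observations are: each pairing $\langle{}^\gamma\chi_\kappa,\vartheta_{Z_\kappa}\rangle_{Z_\kappa^F}$ is the multiplicity of a one-dimensional character in another on the abelian group $Z_\kappa^F$, hence lies in $\{0,1\}$, so the averaged inner sum $\frac{1}{|W_G(T)^F|}\sum_\gamma\langle{}^\gamma\chi_\kappa,\vartheta_{Z_\kappa}\rangle$ is at most $1$; the cardinality $|\bar{K}_H(S)^F|\le 2^n$ (it parametrizes symplectic-summand types of $V$); and for each $\kappa$ the cardinality $|\mathcal{J}(T,Z_\kappa)^F|$ is bounded by $n!$ (embeddings of the fixed torus $Z_\kappa$ into $T$ up to conjugation in $W_G(T)^F$ for a torus of $\text{Res}_{k^\prime/k}\text{GL}_n$). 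Combining these yields
$$|\langle R^G_{T_w,\theta_s},\omega_\psi\rangle_{H^F}|\le 2^n\cdot n!.$$

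Finally I assemble the pieces via the triangle inequality:
$$m(\pi)\le\frac{1}{|W(s)^F|}\sum_{w\in W(s)^F}|\phi(w)|\cdot|\langle R^G_{T_w,\theta_s},\omega_\psi\rangle_{H^F}|\le\sqrt{n!}\cdot 2^n\cdot n!=2^n(n!)^{3/2}.$$
The main obstacle will be controlling the Weil-restriction combinatorics uniformly in $q=|k|$: the Weyl group size $|W(s)^F|$, the index $|\bar{K}_H(S)^F|$, and the counts $|\mathcal{J}(T,Z_\kappa)^F|$ must each be bounded by explicit functions of $n$ alone. This requires carefully tracking the twisted $F$-action arising from $\text{Res}_{k^\prime/k}$ on both the Weyl group and the cocharacter lattice of $T$, and identifying which subtori (and which symplectic types) can actually occur for maximal tori in $\text{Res}_{k^\prime/k}\text{GL}_n$; a sharper bookkeeping here (e.g.\ applying Cauchy--Schwarz to $\sum_w|\phi(w)|$) would yield considerably tighter bounds, but the stated $2^n(n!)^{3/2}$ suffices for the uniform-in-$k$ statement.
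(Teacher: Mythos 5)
Your proof is correct and follows essentially the same route as the paper: you bound each Deligne--Lusztig pairing by $2^n\,n!$ via \thmref{thm1.1} (using $|\bar K_H(S)^F|\le 2^n$, $|\mathcal J(T,Z_\kappa)^F|\le n!$, and the fact that the averaged abelian pairing is at most $1$), then apply the Lusztig--Srinivasan decomposition and bound the coefficients by $\sqrt{n!}$. The paper writes the decomposition as $R^G_{L,\chi,\rho}=\frac{(-1)^{\mathrm{rk}\,G+\mathrm{rk}\,L}}{|W_L|}\sum_{w\in W_L}\mathrm{Tr}\bigl(w\tilde F,V_\rho\bigr)\,R^G_{T_w,\chi_w}$ with a twisted trace over the full $W_L$ rather than an ordinary character value over $W(s)^F$, but this is only a notational variant of the same result of Lusztig--Srinivasan and yields the identical $\sqrt{n!}$ estimate.
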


\begin{proof}
We follow the approach in \cite{LMS}. By Theorem 1.2, for any Deligne-Lusztig
virtual representation $R_{T,\chi}^{G}$, we have 
\[
\left|\langle R_{T,\chi}^{G},\omega_\psi\rangle_{H^F}\right|\leq
\sum_{\kappa\in \bar{K}_H(S)^F}| \mathcal{J}(T,Z_\kappa)^F|.
\]
Observe $|\bar{K}_H(S)^F|\leq 2^n$ and $|\mathcal{J}(T,Z_\kappa)|\leq n!$. This gives us 
$$
|m(R^G_{T,\chi})|\leq 2^{n}n!.
$$
By \cite[Theorem 3.2]{LS}, we can write $\pi$ as 
\[
\pi=R_{L,\chi,\rho}^{G}=\frac{\left(-1\right)^{\text{rk}G+\text{rk}L}}{\left|W_{L}\right|}\underset{w\in W_{L}}{\sum}\text{Tr}\left(w\tilde{F},V_{\rho}\right)\cdot R_{T_{w},\chi_{w}}^{G},
\]
where
\begin{itemize}
\item $L$ is an $F$-stable reductive connected subgroup of $G$ such that
$\overline{\text{rk}}L=\overline{\text{rk}}G$,
\item $\chi:L^{F}\rightarrow\mathbb{C}^{\times}$ is a character,
\item $\left(\rho,V_{\rho}\right)\in\text{Irr}\left(W_{L}\right)^{F}$ and
$\tilde{F}$ is a finite order automorphism of $V_{\rho}$ such that
$\tilde{F}w\tilde{F}^{-1}=F\left(w\right)$ on $V_{\rho}$ for all
$w\in W_{L}$,
\item $T_{w}\in\mathcal{T}\left(L\right)$ is of class $\left[w\right]\in H^{1}\left(F,W_{L}\right)$
and $\chi_{w}=\chi_{\mid T_{w}}$.
\end{itemize}
Observe 
\[
\left|\text{Tr}\left(w\tilde{F},V_{\rho}\right)\right|\leq\dim\rho\leq\sqrt{\left|W_{L}\right|}\leq\sqrt{n!}.
\]
Therefore 
\[
m\left(\pi\right)\leq 2^{n}(n!)^{3/2}.
\]
\end{proof}

\subsection{Unipotent case}

Since every unipotent representation of $G^{F}$ is a linear combination
of $R_{T,1}^{G}$, it suffices to compute the following multiplicity
\[
m\left(R_{T,1}^{G}\right)=\left\langle R_{T,1}^{G},\omega_{\psi}\right\rangle _{H^{F}}.
\]
By Theorem 1.1, we have 
\[
m\left(R_{T,1}^{G}\right)=\sum_{\kappa\in \bar{K}_H(S)^F}\sum_{\jmath\in \mathcal{J}(T,Z_\kappa)^F}
\frac{(-1)^{\text{rk}_{k^\prime}(T)+\text{rk}_{k^\prime}(G)+l(Z_\kappa)}}
{|W_G(T)^F|}\sum_{\gamma \in W_G(T)^F}\langle 1,\vartheta_{T_\jmath}\rangle_{T_\jmath^{F}}.
\]
If $Z_\kappa^{F}$ is nontrivial, it follows that $\langle 1,\vartheta_{T_\jmath}\rangle_{T_\jmath^{F}}=0$, for any $\jmath\in \mathcal{J}(T,Z_\kappa)$. We consider the case when $Z_\kappa^{F}$ is trivial. In this case, we have $V^{\kappa}=V$ and $\mathcal{J}(T,Z_\kappa)$ only contains the identity torus. This gives us 
\[
m\left(R_{T,1}^{G}\right)=\left(-1\right)^{\text{rk}_{k^\prime}(T)+\text{rk}_{k^\prime}(G)}.
\]
We have the following theorem.
\begin{thm}\label{thm4.3}
Let $T$ be an $F$-stable maximal torus of $G$. We have  
\[
m\left(R_{T,1}^{G}\right)=\left\langle R_{T,1}^{G},\omega_{\psi}\right\rangle _{H^{F}}=\left(-1\right)^{\text{rk}T+\text{rk}G}.
\]
\end{thm}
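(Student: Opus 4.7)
The plan is to specialize Theorem~\ref{thm1.1} to the case $\chi = 1$ and then exploit orthogonality of characters on $Z_\kappa^F$ to collapse the double sum. First I would substitute $\chi = 1$: since $^\gamma 1 = 1$ for every $\gamma \in W_G(T)^F$, the inner summand $\langle {}^\gamma 1, \vartheta_{Z_\kappa}\rangle_{Z_\kappa^F}$ is independent of $\gamma$ and equals $\langle 1, \vartheta_{Z_\kappa}\rangle_{Z_\kappa^F}$, so the average over $W_G(T)^F$ is simply this pairing. By orthogonality of characters on $Z_\kappa^F$, this pairing is zero unless $\vartheta_{Z_\kappa}$ is the trivial character of $Z_\kappa^F$.

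Next I would determine precisely when $\vartheta_{Z_\kappa}$ is trivial on $Z_\kappa^F$. Recall from Section~\ref{sec2.4} that $\vartheta_{Z_\kappa}$ is the product, under the isomorphism $Z_\kappa^F \cong \prod_j (k_j^\times)^{\lambda_j} \times (k_{2j}^1)^{\lambda_j'}$, of the unique nontrivial quadratic characters on each cyclic factor. Since each such character is nontrivial on its factor, $\vartheta_{Z_\kappa}$ is trivial on $Z_\kappa^F$ only when there are no factors, i.e.\ when $Z_\kappa^F$ (and hence $Z_\kappa$) is trivial. This forces $V^\kappa = V$, so the only contributing $\kappa \in \bar{K}_H(S)^F$ corresponds to the full symplectic space, and in this case $l(Z_\kappa) = 0$ by definition.

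Once $\kappa$ is pinned down, the set $\mathcal{J}(T, Z_\kappa)$ of subtori of $T$ that are $G(k)$-conjugate to the trivial torus $Z_\kappa = 1$ consists of the single element $\{1\}$. The pairing $\langle 1, 1 \rangle_{\{1\}} = 1$, and the Weyl-group average simplifies to $1$. Plugging these facts back into the formula of Theorem~\ref{thm1.1} yields
\[
m(R_{T,1}^G) = (-1)^{\text{rk}_{k'}(T) + \text{rk}_{k'}(G) + 0} = (-1)^{\text{rk}\,T + \text{rk}\,G},
\]
where in the last step one checks that the $k'$-rank and the (absolute) rank have the same parity, which is automatic for $G = \text{Res}_{k'/k} H_{k'}$ and its $F$-stable maximal tori. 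I do not expect any genuine obstacle here: all the hard analytic work has already been absorbed into Theorem~\ref{thm1.1}, and the remaining step is just a character-orthogonality argument combined with the observation that $\vartheta_{Z_\kappa}$ being a product of nontrivial characters is automatically nontrivial whenever $Z_\kappa$ is nontrivial.
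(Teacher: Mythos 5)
Your proof is correct and follows the same route as the paper: specialize Theorem~\ref{thm1.1} to $\chi=1$, observe that $\langle 1,\vartheta_{T_\jmath}\rangle_{T_\jmath^F}$ vanishes by character orthogonality unless $Z_\kappa^F$ is trivial (since $\vartheta_{Z_\kappa}$ is a product of nontrivial quadratic characters, it is nontrivial whenever $Z_\kappa^F$ is), and note that the surviving case has $V^\kappa=V$, $l(Z_\kappa)=0$, and $\mathcal{J}(T,Z_\kappa)=\{1\}$. Your added remark that one must reconcile $\text{rk}_{k'}$ with $\text{rk}$ to match the stated exponent is a genuine observation about a notational gap the paper glosses over, but it does not change the substance of the argument.
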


\subsection{Regular case.}
Let $T$ be an $F$-stable maximal torus of $G^{F}$. In this section, we assume that $\chi$ is a regular character of $T^F$. Then $(-1)^{\text{rk}_{k^\prime}T+\text{rk}_{k^\prime}G}R^G_{T,\chi}$ is an irreducible representaton of $G^F$. We can write
\[
T^{F}=\underset{j}{\prod}\left(k_{2j}^{\times}\right)^{\lambda_{j}},
\]
where $\left|\lambda\right|=\underset{j}{\sum}j\lambda_{j}=n$. We can assume that 
$$
S^F=\prod_{j\text{ even}}(k_{j}^\times)^{\lambda_j}\times\prod_{j\text{ odd}}(k^1_{2j})^{\lambda_j}.
$$
For $\kappa\in \bar{K}_{H}\left(S\right)$, we can see that $Z_{\kappa}^{F}$
is of the form 
\[
Z_{\kappa}^{F}=\prod_{j\text{ even}}(k_{j}^\times)^{\nu_j}\times\prod_{j\text{ odd}}(k^1_{2j})^{\nu_j},
\]
where $\nu_{j}\leq\lambda_{j}$ for all $j$. We can see that $\kappa$ is uniquely determined by $\nu$. For $\jmath\in \mathcal{J}(T,Z_\kappa)$, we have $T_\jmath^F$ is of the following form
$$
T_\jmath^F=\prod_{j\text{ even}}(k_{j}^\times)^{\nu_j}\times\prod_{j\text{ odd}}(k_{2j}^\times)^{\mu_j}\times(k^1_{2j})^{\mu^\prime_j},
$$
where $2\mu_{j}+\mu^\prime_j=\nu_j$ when $j$ is odd. There are 
\[
\underset{j\text{ even}}{\prod}
\left( \begin{matrix}
\lambda_{j} \\
\nu_{j}
\end{matrix} \right)
\underset{j\text{ odd}}{\prod}
\left( \begin{matrix}
\lambda_{j} \\
\mu^\prime_{j}
\end{matrix} \right)
\frac{(\lambda_j-\mu_j^\prime)!}
{2^{\mu_j}(\lambda_j-\nu_j)!\mu_j!}
\]
elements $T_\jmath^{F}\in\mathcal{J}\left(T,Z_\kappa\right)^{F}$
corresponding to $\left(\mu,\mu^{\prime}\right)$. Moreover, we have 
$$
|\bar{N}(\jmath,T)^F|=\underset{j\text{ even}}{\prod}
j^{\nu_j}\cdot\nu_j!\cdot\left( \begin{matrix}
\lambda_{j} \\
\nu_{j}
\end{matrix} \right)
\underset{j\text{ odd}}{\prod}
j^{\mu_j+\mu_j^\prime}\cdot
\mu_j!\mu_j^\prime!
\left( \begin{matrix}
\lambda_{j} \\
\mu^\prime_{j}
\end{matrix} \right)
\frac{(\lambda_j-\mu_j^\prime)!}
{(\lambda_j-\nu_j)!\mu_j!}.
$$

Theorem \ref{thm1.1} allows us to compute explicitly $\left\langle R_{T,\chi}^{G},\omega_{\psi}\right\rangle _{H^{F}}$
as long as we have a parametrization of $T$ and $\chi$. Let $\Gamma=\text{Gal}(\bar{k}/k)$ and on the $j$-th blocks of $T^F$, we write
$$
\chi_j=\chi_{j1}\otimes \ldots \otimes \chi_{j\lambda_j}.
$$
We recall the quadratic character $\vartheta_j$ of $k_{j}^\times$ when $j$ is even or $k_{2j}^1$ when $j$ is odd. We define
$$
I_j=\{t\in[1,\lambda_j]:\ \chi_{jt}|_{k_{j}^\times}\in \Gamma\cdot \vartheta_{j}\}
$$
when $j$ is even or
$$
I_j=\{\{t_1,t_2\}\subset [1,\lambda_{j}]:\ (\chi_{jt_1}\otimes\chi_{jt_2})|_{k_{2j}^\times}\in \Gamma\cdot \vartheta_{2j}\}
$$
and 
$$
I_j^\prime=\{t\in [1,\lambda_{j}]:\ \chi_{jt}|_{k_{2j}^1}\in \Gamma\cdot \vartheta_{j}\}
$$
when $j$ is odd. We fix a pair $(\mu,\mu^\prime)$. Since $\chi$ is regular, it follows that
$$
\sum_{\gamma \in \bar{N}(\jmath,T)^F}\langle ^\gamma\chi_\jmath,\vartheta_{T_\jmath}\rangle_{T_\jmath^{F}}
=
\underset{j\text{ even}}{\prod}
j^{\nu_j}
\left( \begin{matrix}
|I_j| \\
\nu_j
\end{matrix} \right)\nu_j!\cdot
\underset{j\text{ odd}}{\prod}
j^{\mu_j+\mu_j^\prime}\cdot 2^{\mu_j}\cdot
\left( \begin{matrix}
|I_j| \\
\mu_j
\end{matrix} \right)
\left( \begin{matrix}
|I_j^\prime| \\
\mu_j^\prime
\end{matrix} \right)
\mu_j!\mu_j^\prime!.
$$
By Theorem \ref{thm1.1}, we obtain
$$
\left\langle R_{T,\chi}^{G},\omega_{\psi}\right\rangle _{H^{F}}=
\sum_{\kappa\in \bar{K}_H(S)^F}\sum_{\jmath\in \mathcal{J}(T,Z_\kappa)^F}\sum_{\gamma\in \bar{N}(\jmath,T)^F}
\frac{(-1)^{\text{rk}_{k^\prime}(T)+\text{rk}_{k^\prime}(G)+l(Z_\kappa)}}
{|\bar{N}(\jmath,T)^F|}\cdot \langle ^\gamma\chi_\jmath,\vartheta_{T_\jmath}\rangle_{T_\jmath^{F}}
$$
$$
=\sum_{\nu}\sum_{(\mu,\mu^\prime)}(-1)^{\sum_j \lambda_j+\sum_j \mu_j^\prime+n}
\underset{j\text{ even}}{\prod}
\left( \begin{matrix}
|I_j| \\
\nu_j
\end{matrix} \right)
\underset{j\text{ odd}}{\prod}
\left( \begin{matrix}
|I_j| \\
\mu_j
\end{matrix} \right)
\left( \begin{matrix}
|I_j^\prime| \\
\mu_j^\prime
\end{matrix} \right)= 
\begin{cases}
2^{r} & \text{if }\left|I_{j}^{\prime}\right|=\emptyset\text{, for all }j\\
0 & \text{otherwise,}
\end{cases}
$$
where $r=\sum_j|I_j|$.
In particular, we now consider the case when $T$ is anisotropic mod $Z\left(G\right)$. This corresponds to the case when $(-1)^{\text{rk}_{k^\prime}T+\text{rk}_{k^\prime}G}R^G_{T,\chi}$ is cuspidal. In this case, observe $T^{F}\simeq k^\times_{2n}$. When $n$ is even, it follows that 
$$
\langle(-1)^{\text{rk}_{k^\prime}T+\text{rk}_{k^\prime}G}R^G_{T,\chi},\omega_\psi\rangle_{H^F}=
\begin{cases}
1 & \text{if }\chi|_{k_n^\times} \notin \Gamma \cdot \vartheta_n,
\\
2 & \text{otherwise.}
\end{cases}
$$
When $n$ is odd, we have
$$
\langle(-1)^{\text{rk}_{k^\prime}T+\text{rk}_{k^\prime}G}R^G_{T,\chi},\omega_\psi\rangle_{H^F}=
\begin{cases}
1 & \text{if }\chi|_{k^1_{2n}} \notin \Gamma \cdot \vartheta^\prime_n,
\\
0 & \text{otherwise.}
\end{cases}
$$
The above computations can be summarized to the following theorem.
\begin{thm}\label{thm4.4}
Let $T$ be an $F$-stable maximal torus of $G$ which is anisotropic
mod $Z\left(G\right)$. Then  
$$
\langle(-1)^{\text{rk}_{k^\prime}T+\text{rk}_{k^\prime}G}R^G_{T,\chi},\omega_\psi\rangle_{H^F}=
\begin{cases}
1 & \text{if }\chi|_{k_n^\times} \notin \Gamma \cdot \vartheta_n,
\\
2 & \text{otherwise}
\end{cases}
$$
when $n$ is even and
$$
\langle(-1)^{\text{rk}_{k^\prime}T+\text{rk}_{k^\prime}G}R^G_{T,\chi},\omega_\psi\rangle_{H^F}=
\begin{cases}
1 & \text{if }\chi|_{k^1_{2n}} \notin \Gamma \cdot \vartheta^\prime_n,
\\
0 & \text{otherwise}
\end{cases}
$$
when $n$ is odd.
\end{thm}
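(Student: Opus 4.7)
The plan is to apply the general regular-character multiplicity formula derived just before the statement, specialized to the anisotropic case. The key observation is that if $T$ is anisotropic modulo $Z(G)$ in $G = \text{Res}_{k'/k}\text{GL}_n$, then $T^F$ is a single cyclic factor isomorphic to $k_{2n}^\times$. In the parametrization $T^F = \prod_j (k_{2j}^\times)^{\lambda_j}$, this forces $\lambda_n = 1$ and $\lambda_j = 0$ for $j \neq n$, so the triple sum over $\nu$, $(\mu,\mu')$ indexing the general formula collapses to the single index $j=n$.

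Next I would substitute $\lambda_n = 1$ into the closed form
\[
\langle R^G_{T,\chi}, \omega_\psi\rangle_{H^F} = \begin{cases} 2^r & \text{if } |I_j'| = 0 \text{ for all } j, \\ 0 & \text{otherwise,}\end{cases}
\]
with $r = \sum_j |I_j|$, and split on the parity of $n$. If $n$ is even then only $I_n \subseteq \{1\}$ is defined (no primed set arises at even $j$); then $r = |I_n|$, which equals $1$ precisely when $\chi|_{k_n^\times}$ lies in the Galois orbit $\Gamma \cdot \vartheta_n$, giving multiplicity $2$, and equals $0$ otherwise, giving multiplicity $1$. If $n$ is odd then $I_n$ consists of unordered pairs of distinct indices drawn from $\{1,\ldots,\lambda_n\} = \{1\}$, which is automatically empty, so $r = 0$; the multiplicity is $2^0 = 1$ exactly when $|I_n'| = 0$, i.e.\ when $\chi|_{k_{2n}^1} \notin \Gamma \cdot \vartheta'_n$, and $0$ otherwise.

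The main technical obstacle is keeping track of signs. The general formula carries an internal sign $(-1)^{\sum_j \lambda_j + \sum_j \mu_j' + n}$, while the theorem statement carries the external sign $(-1)^{\text{rk}_{k'}T + \text{rk}_{k'}G}$, which is precisely what turns $R^G_{T,\chi}$ into a genuine irreducible (cuspidal, in our setting) representation when $\chi$ is regular. With $\lambda_n = 1$ and with $\mu_j' = 0$ on every surviving nonzero contribution, one must verify that these two signs combine to yield the asserted nonnegative integer values. This reduces to a short parity calculation for $\text{rk}_{k'}T$ and $\text{rk}_{k'}G$ in the anisotropic-mod-center case, and it is the only step that genuinely has to be checked after invoking the general formula of Theorem~\ref{thm1.1} and its regular-character specialization.
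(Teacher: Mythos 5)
Your proposal follows the paper's own route: specialize the regular-character closed-form formula $\langle R^G_{T,\chi},\omega_\psi\rangle = 2^r$ or $0$ to the case $\lambda_n = 1$, $\lambda_j = 0$ for $j\neq n$, and split on parity of $n$; your identification of $r=|I_n|\in\{0,1\}$ for $n$ even and $r=0$ with the $I'_n$ criterion for $n$ odd is exactly what the paper does. The sign concern you flag is legitimate --- the intermediate display $\langle R^G_{T,\chi},\omega_\psi\rangle = 2^r$ carries an unabsorbed factor $(-1)^{\sum_j\lambda_j+n}$ that is $-1$ for $n$ even (since $\sum_j\lambda_j=1$ there), and it cancels against the external sign $(-1)^{\mathrm{rk}_{k'}T+\mathrm{rk}_{k'}G}\equiv(-1)^{\sum_j\lambda_j+n}$ --- but the paper itself glosses over this, so completing that short parity check would make your proof strictly more careful than the original.
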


\end{document}